\newcommand{\arXiv}[1]{\href{http://arxiv.org/abs/#1}{arXiv:#1}}
\newcommand{\ep}{\varepsilon}
\renewcommand{\Pr}{\mathbb{P}}
\newcommand{\eps}{\varepsilon}
\renewcommand{\hat}{\widehat}
\DeclareMathOperator{\bern}{Bernoulli}
\newcommand{\df}{\textbf}
\begin{document}

\section{Introduction}

Coupling of Markov chains has proved to be a valuable tool, notably,
in recent years, in proving rapid mixing.  Our intent here is to isolate
one very simple type of Markov chain (random walk, especially on a complete
graph) and to explore one particular capability, that of avoiding collision.

As an application, one may envisage some anti-virus software
moving from port to port in a computer system to check for incursions.  It is
natural to have such a program implement a random walk on the ports so as
not to be predictable.  If another program (possibly with a different
purpose) also does a random walk on the ports, it may be desirable or even
essential to prevent the programs from examining the same port at the same time.

If two random walks are independent, they will collide in polynomial time
on any finite, connected, non-bipartite graph, even if a scheduler tries to keep them apart \cite{CTW,TW}.
Only if the scheduler is clairvoyant---that is, knows the entire future of each
walk---is there a possibility of avoiding collision forever, and that case rests
on a complex proof \cite{G} for enormous graphs.

Coupling, on the other hand, is a much more powerful technique for
keeping random walks apart.  On the cycle $C_n$, for example (where the clairvoyant
scheduler has no chance), coupling can easily keep linearly many random walks
apart, simply by having them either all move clockwise, or all counter-clockwise, at
the same time.

Keeping random walks apart on a complete graph $K_n$ by coupling---especially
Markovian coupling, which we define more formally below---appears to be a
more difficult task. We apply a number of techniques to achieve such
couplings, depending on number-theoretic properties of $n$.  For infinitely
many $n$ there is a Markovian coupling which keeps apart $(n-1)/2$ random
walkers, and for all $n$ there is a Markovian coupling which keeps apart
$\Omega(n/\log n)$ random walkers.  We have essentially the same results on
the looped version of the complete graph $K_n^*$, and in this case we also have
non-Markovian couplings of linearly many walkers, for all $n$.

The closely related problem of coupling two Brownian motions (on various
domains) so as to keep them at least some positive distance apart has been
studied in some depth---see \cite{benjamini-burdzy-chen,kendall}, and
\cite{bramson-burdzy-kendall} for a recent connection to pursuit-evasion
problems.

\section{Preliminaries}

We refer the reader to a modern text such as \cite{AF,LPW} (both of which are
accessible online), for background on discrete Markov chains.  All of our
Markov chains are time-homogeneous and have finite state spaces.

A \df{coupling} of Markov chains is nothing more than an implementation of
the chains on a common probability space, in such a way that each chain,
viewed separately, is faithful to its transition matrix. In what follows,
$X_t$ and $Y_t$ for $t = 0,1,2,\dots$ will represent simple discrete-time
random walks on the loopless complete graph $K_n$, or its looped counterpart
$K_n^*$ (in which each vertex has a single self-loop, and the walk stays
where it is with probability $1/n$ at each step). Clearly a time~$t$ for
which $X_t = Y_t$ should constitute a collision, but what if $X_{t+1} = Y_t$?
If in addition $Y_{t+1} = X_t$ we call such an event a ``swap'', otherwise a
``shove''.

Allowing swaps and shoves makes things easy---on $K_n$, for example, we could
couple $n$ walks simply by choosing, at each turn, a uniformly random derangement
(or a uniformly random cycle)~$\sigma$, and having the walker at $i$ move to $\sigma(i)$.

Instead, we make the issue of swaps and shoves moot by having the walkers
move alternately---in the case of two walkers, in the order $X_0, Y_0, X_1,
Y_1,\dots$.  Then the events $X_t = Y_t$ and $X_{t+1}=Y_t$ both constitute
collisions.  Multiple walkers are assumed to take turns in a fixed cyclic
order, and again, a collision is deemed to occur exactly if a walker moves to
a vertex currently occupied by another. We call a coupling that forbids
collisions an \df{avoidance coupling}.

Note that a collection of random walkers who move in \textit{continuous\/}
time, that is, after independent exponential (mean 1) waiting times, can be
coupled so as to take turns as above; if there are $k$ walkers, we
simply have walker $j$ wait for a random time after walker $j{-}1$ (modulo $k$)
has moved, where the time is distributed according to a Gamma distribution
with shape parameter $1/k$ and scale parameter~$1$.  (The sum of $k$
independent such random variables is an exponential random variable with mean~1.)
Thus, any coupling of our alternating discrete-time walkers can be
applied to the continuous-time case.

A coupling is \df{Markovian} if it is itself a Markov chain, meaning, in the
two-walker case, that $X_{t+1}$ depends only on $X_t$ and $Y_t$, while
$Y_{t+1}$ depends only on $Y_t$ and $X_{t+1}$.  (Brownian couplings with the
analogous property are referred to as ``co-adapted'' in \cite{kendall}.)  To
allow the walkers to alternate, we tacitly assume that the state of the
coupled chain includes the information of whether it is the first or second
player's turn to move.  For multiple walkers, the dependence is, similarly,
only on the current locations of all the walkers, and on whose turn it is to
move.

In the couplings that we construct, the individual walkers may be taken to be
stationary, in the sense that their initial states $X_0$ and $Y_0$ are each
uniformly distributed over the $n$ states.  However, we permit $X_0$ and
$Y_0$ to be coupled in an arbitrary way.  Any such coupling may be modified
to make $(X_0,Y_0)$ uniformly random over all pairs (perhaps at the expense
of the Markovian property) by applying a random permutation to the states.

Note that the transformation above from discrete-time to continuous-time chains
does not preserve the Markovian property. Indeed, it is rarely possible to
get a Markovian avoidance coupling for continuous-time chains:
\begin{theorem}
Let $M$ be an irreducible, continuous-time, finite-state Markov chain. Then
there is no Markovian coupling of two or more copies of $M,$ without
simultaneous transitions, that avoids all collisions.
\end{theorem}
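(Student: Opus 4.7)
Plan: Assume for contradiction that such a coupling of $k \ge 2$ copies exists; fix two of the walkers with positions $X_t, Y_t$ (the other walkers play no role in the argument). The proof rests on a rigidity property specific to continuous time: in any Markovian coupling whose coordinates are marginally faithful to $M$, the rate at which walker $1$ jumps from $x$ to $x'$, evaluated at any joint state, must equal $q(x,x')$ pointwise. Otherwise this rate would depend on the hidden states of the other walkers, and the marginal process --- whose past is correlated with those hidden states through the joint dynamics --- would fail to be a Markov chain with rates $q$, contradicting faithfulness.

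Define
\[
R \;=\; \{(x,y) \in V \times V : x \ne y \text{ and } (X_t, Y_t) = (x, y) \text{ with positive probability for some } t\}.
\]
The set $R$ is non-empty, since it contains the initial pair. I will establish two closure properties of $R$ and then reach a contradiction using the irreducibility of $M$.

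\textbf{First closure.} For every $(x,y) \in R$ we must have $q(x,y)=0$. Indeed, suppose $q(x,y) > 0$. Starting from a reachable joint state with walkers $1,2$ at $(x,y)$, the total exit rate $\Lambda$ from that joint state is finite, and by standard CTMC arguments the next transition is walker $1$'s jump to $y$ with probability $q(x,y)/\Lambda > 0$, producing an immediate collision, contradiction. \textbf{Second closure.} For every $(x,y) \in R$ and every $x'$ with $q(x,x') > 0$, $(x',y) \in R$. By the same CTMC argument, from a reachable joint state with walkers $1,2$ at $(x,y)$, walker $1$'s jump to $x'$ is the very next transition with positive probability $q(x,x')/\Lambda$, giving a reachable joint state with walkers $1,2$ at $(x', y)$.

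To finish, pick any $(x,y) \in R$ and, using irreducibility, a path $x = u_0 \to u_1 \to \cdots \to u_\ell = y$ with $q(u_i, u_{i+1}) > 0$ for all $i$. Iterating the second closure gives $(u_1, y), \dots, (u_{\ell-1}, y) \in R$, and then the first closure forces $q(u_{\ell-1}, y) = 0$, contradicting the final path edge. The main obstacle is justifying the pointwise-rate rigidity stated at the outset: this is precisely the point at which the continuous-time setting diverges from the discrete-time one that drives the rest of the paper, since in discrete time the paper's constructions exploit the freedom to let each walker's move depend on the positions of the others (only the averaged marginal needing to be correct), whereas in continuous time a Markovian coupling has no such latitude.
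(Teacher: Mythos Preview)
Your argument hinges on the ``pointwise-rate rigidity'' claim, which you rightly flag as the main obstacle --- but it is a genuine gap rather than a routine technicality. Faithfulness of the $X$-marginal yields only an averaged statement: from the compensator characterisation of a CTMC,
\[
\E\bigl[\lambda_{W_t \to (x',\cdot)} \,\big|\, \mathcal{F}_t^X\bigr] = q(x,x') \quad \text{a.s.\ on }\{X_t = x\},
\]
where $W$ is the joint chain and $\lambda_{w\to(x',\cdot)}$ is the total rate of transitions out of $w$ that move walker~$1$ to $x'$. Upgrading this to $\lambda_{w\to(x',\cdot)}=q(x,x')$ for every reachable $w$ is precisely the gap between weak and strong lumpability, and it does not follow from the hypotheses: nothing so far rules out a coupling that sets walker~$1$'s rate into $y$ to zero at every joint state with walker~$2$ at $y$, compensating at other hidden states so that the conditional average is still $q(x,y)$. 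Your first closure property --- and hence the entire contradiction --- rests on this unestablished point.

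The paper's proof avoids the lumpability question altogether via a short counting argument. Fix a tour $v_0,\dots,v_k$ through \emph{all} states with each step of positive rate. The marginal law of $X$ assigns probability $\Theta(\varepsilon^k)$ to the event that $X$ follows the tour exactly and completes it before time $\varepsilon$. In any avoidance coupling without simultaneous transitions, however, since the tour visits every vertex, the second walker must vacate its initial vertex at least once, forcing at least $k{+}1$ transitions of the joint chain before time $\varepsilon$; this event has probability $O(\varepsilon^{k+1})$, uniformly over the initial joint state. For small $\varepsilon$ this contradicts faithfulness. Because the $O(\varepsilon^{k+1})$ bound holds pointwise in the hidden state, no averaging --- and hence no rigidity --- is needed.
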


\begin{proof}
  Suppose $X$ and $Y$ are copies of $M$ that are coupled in this way.
  Since $M$ is irreducible, we may fix some tour $v_0,v_1,\dots,v_k$ of all
  the states that has a positive probability.  Let $r_i>0$ be the rate at
  which the transition $v_{i-1} \to v_i$ occurs, and consider the
  probability $p$ that when started in state $v_0$, the single chain $X$
  follows the tour exactly and completes it in time less than $\ep$. Then
\[
p = (1 + {\rm o}(1)) \prod_{i=1}^k r_i \frac{\ep^k}{k!} = \Theta(\ep^k),
\]
where the constants implied by the $\Theta$ notation depend on the Markov chain
and the tour, but not on $\ep$.

Next we start the coupled chain $(X,Y)$ in state $(v_0,s)$ for some $s$, and
consider the probability $q(s)$ that its projection onto the first chain
takes the tour and completes it in time less than~$\ep$.  Since the coupled
chain is collision-avoiding, it must take at least one additional step in
order to move the second walker out of the way.  But then at least $k+1$
transitions must take place within time $\ep$, thus \[q(s) \le
\sum_{j=k+1}^\infty \frac{\ep^{j} R^{j}}{j!} = {\rm O}(\ep^{k+1})\] where
$R$ is the maximum, over all states of the coupled chain, of the rate of
transition out of that state.

For the coupling to be faithful, however, we must have $q(s) \ge p$ for some
$s$.  Since $p = \Theta(\ep^k)$ and $q(s) = {\rm O}(\ep^{k+1})$, this is
impossible for small enough $\ep$.
\end{proof}

\section{Two walkers on three vertices}

No avoidance coupling is possible for two walkers on $K_3$, since there is no
choice of where to move, hence no room for randomness.  On the looped graph
$K_3^*$, however, a walker stays where she is with probability $1/3$.  We
shall see that this is enough to permit an avoidance coupling, but not a
Markovian one. In fact, we can completely analyze the more general walk on
$K_3^*$ in which a walker stays in place with some arbitrary probability $s$,
and moves to each of the other two vertices with probability $(1{-}s)/2$.

\begin{theorem} \label{k3loops}
Consider two walkers on $K_3^*$, each with looping probability $s\in[0,1)$.
There exists an avoidance coupling if and only if $s\geq \tfrac 13$, and
there exists a Markovian avoidance coupling if and only if $s\geq \tfrac12$.
\end{theorem}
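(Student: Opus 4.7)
The plan is to handle the four implications separately, working throughout in a stationary distribution of the joint coupling.

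For necessity of $s \geq \tfrac 13$ in any avoidance coupling, the key observation is that an oscillating $X$-trajectory forces $Y$ to stay put at a single vertex: if $X_0 = 0, X_1 = 1, X_2 = 0, \ldots, X_n = n \bmod 2$, then at each time $t \in \{0, \ldots, n-1\}$ the constraints $Y_t \neq X_t$ and $Y_t \neq X_{t+1}$ force $Y_t = 2$. Hence the probability of this $X$-trajectory, which equals $\tfrac{1}{3}\bigl(\tfrac{1-s}{2}\bigr)^n$ under $X$'s marginal law, is bounded above by $\Pr(Y_0 = \cdots = Y_{n-1} = 2) = \tfrac{1}{3} s^{n-1}$ under $Y$'s marginal. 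Letting $n \to \infty$ yields $\tfrac{1-s}{2} \leq s$, i.e.\ $s \geq \tfrac 13$.

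For necessity of $s \geq \tfrac 12$ in the Markovian case, the marginal uniformity of each walker forces the joint stationary distribution $\pi$ to be invariant under rotation of $\{0,1,2\}$ and hence parameterized by $r := \Pr(D_t = 1)$, where $D_t := Y_t - X_t \bmod 3 \in \{1,2\}$. Let $w$ be the probability that $X$ moves in a $D=1$ state (necessarily to the unique third vertex, flipping $D$), and $v$ the probability that $Y$ stays in such a state. The requirement that each walker's single-step transition match the prescribed random walk forces $w = (1-s)/(2r)$ and $v = 1 - (1-s)/(2r)$, in particular $v + w = 1$. Computing the Markovian probability of $\{X_0 = 0, X_1 = 1, X_2 = 0\}$ (which forces $Y_0 = Y_1 = 2$) breaks into the single-step factor $\Pr(X_0 = 0, X_1 = 1) = (1-s)/6$ times the two subsequent factors $v$ (the $Y$-stay) and $w$ (the $X$-move); equating to the marginal value $(1-s)^2/12$ gives $vw = (1-s)/2$. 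Combined with $v + w = 1$, the AM--GM inequality forces $(1-s)/2 \leq \tfrac 14$, so $s \geq \tfrac 12$.

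For the sufficiency parts: when $s \geq \tfrac 12$ the system $v + w = 1$, $vw = (1-s)/2$ has real roots $w \in \{(1 \pm \sqrt{2s-1})/2\}$; fixing either one and setting $r = (1-s)/(2w)$ defines the Markovian coupling via the corresponding $D$-dependent stay probabilities (at each move, the active walker either stays or moves to the unique third vertex, flipping $D$), and the individual marginals can be verified to match the prescribed random walk by direct computation. For $s \in [\tfrac 13, \tfrac 12)$ the construction must be non-Markovian, and this I expect to be the hardest step, since the preceding paragraph rules out every Markov chain on $(X_t, Y_t)$. Such a construction must exploit memory beyond the current state---either persistent shared randomness, or a clairvoyant scheduler that peeks at $X$'s future moves to decide $Y$'s current one---while still producing an honest random-walk marginal for $Y$. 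The lower-bound argument from the first paragraph is the guide to tightness: $Y$'s ``staying budget'' of marginal rate $s$ must be spent exactly on the intervals where $X$'s oscillation forces $Y$ to stay, and the balance $(1-s)/2 = s$ at $s = \tfrac 13$ is precisely where this budget becomes feasible.
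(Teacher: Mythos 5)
The most serious problem is that the sufficiency half of the first assertion is simply missing: for $\tfrac13\le s<\tfrac12$ you only remark that a non-Markovian construction ``must exploit memory'' and that the budget heuristic suggests $s=\tfrac13$ is where it becomes feasible. That is not a proof, and it is the heart of this half of the theorem. The paper resolves it with a short explicit rule at $s=\tfrac13$: start $(X_0,Y_0)$ uniform among non-colliding pairs and, given $(X_t,Y_t)$, choose $(X_{t+1},Y_{t+1})$ uniformly among the allowed non-colliding pairs \emph{other than} $(X_t,Y_t)$ itself; an induction shows $Y_t$ is uniform on the two vertices $\neq X_t$ and independent of the past, from which both marginal walks are faithful. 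For $s>\tfrac13$ one then lets both walkers rest simultaneously with a suitable probability each round. Without some construction of this kind you have proved only the necessity of $s\ge\tfrac13$ (which you do correctly, essentially as in the paper, modulo the stationarity assumption discussed below).

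Your Markovian necessity argument also has a genuine flaw in the step ``$vw=(1-s)/2$''. Your $v$ and $w$ are conditional probabilities given $D=1$ under the stationary law, i.e.\ averages of the state-dependent stay/move probabilities over the three states in the $D=1$ orbit; rotation-invariance of the stationary \emph{distribution} does not make the transition \emph{kernel} rotation-invariant. Conditioning on $\{X_0=0,X_1=1\}$ pins the current state to (Alice at $1$, Bob at $2$), so the two factors are the state-specific quantities $q_{12}$ and $1-p_{12}$, not $v$ and $w$; with orbit-averaged quantities the identity you need neither follows nor is true in general, and the averaged facts $v+w=1$ and ``average of the products $=(1-s)/2$'' do not force $s\ge\tfrac12$. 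The repair is exactly the paper's argument, which needs no stationarity or rotation-invariance at all: faithfulness of Alice's chain gives $q_{12}(1-p_{12})=(1-s)/2$, the symmetric argument for Bob gives $p_{12}(1-q_{12})=(1-s)/2$, hence $p_{12}=q_{12}$ and $p_{12}(1-p_{12})=(1-s)/2$, impossible for $s<\tfrac12$. Two smaller points: your Markovian sufficiency construction is the paper's (stay probabilities $p$ and $1-p$ according to the sign of the difference, with $q=p$), but the marginal verification you defer to ``direct computation'' is the actual content of that part (the paper does it by conditioning on $(X_1,Y_0)$ and invoking the $p\leftrightarrow1-p$ symmetry), and in both necessity arguments you assume a stationary joint law without justification---for the Markovian case this can be arranged by passing to a stationary version, and for the $s\ge\tfrac13$ bound it is easier to avoid it altogether by leaving the two alternating states unspecified, as the paper does.
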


\begin{proof}[Proof of Theorem~\ref{k3loops}, non-Markovian case]
We first show that an avoidance coupling exists in the case $s=1/3$ (i.e.,
ordinary random walk on $K_3^*$).  We start the coupled chain in a uniformly
random pair of states $(X_0,Y_0)$ such that $X_0\neq Y_0$.
Given $X_t$ and $Y_t$, the pair
$(X_{t+1},Y_{t+1})$ is chosen uniformly at random among the allowed pairs,
\textit{except\/} $(X_t,Y_t)$ itself (see Figure~\ref{fig:32}).

Thus, for example, if Alice is at 0 and Bob at 1, their new positions will be
$(0,2)$, $(2,0)$, or $(2,1)$ each with probability $1/3$.  Notice that this
coupling is not quite Markovian, as Bob's move depends on Alice's
\textit{previous\/} position---he is not permitted to stay put when Alice has
just done so.

We prove by induction on $t$ that $Y_t$ is uniformly random $\neq X_t$,
independent of $X_s$ and~$Y_s$ for $s<t$.  We may assume $X_t = 0$ for the
purpose of showing that $Y_{t+1}$ is uniform $\neq X_{t+1}$ given $X_{t+1}$;
then, using the induction hypothesis and the coupling definition, the triple
$(Y_t, X_{t+1}, Y_{t+1})$ is equally likely to be any of
$(1,0,2)$, $(1,2,0)$, $(1,2,1)$, $(2,0,1)$, $(2,1,0)$, $(2,1,2)$, which completes the
induction.

Using this fact it easily follows that Alice's sequence is i.i.d.\ uniform;
using the fact again, it follows that Bob's sequence is also i.i.d.\ uniform,
as required.  (We remark that this coupling is invariant under time reversal,
except that Alice and Bob exchange roles.)
\tikzset{vertex/.style = {fill, circle, inner sep = 1pt}}

\begin{figure}[t]
\begin{center}
\begin{tikzpicture}[scale=1.5]
\node (01) at (1,0) {0,1};
\node (02) at (0.5,0.866) {0,2};
\node (12) at (-0.5,0.866) {1,2};
\node (10) at (-1,0) {1,0};
\node (20) at (-0.5,-0.866) {2,0};
\node (21) at (0.5,-0.866) {2,1};
\draw[<->, very thick](01)--(02);
\draw[<->, very thick](10)--(12);
\draw[<->, very thick](21)--(20);
\draw[<->, very thick](10)--(20);
\draw[<->, very thick](01)--(21);
\draw[<->,very thick](12)--(02);
\draw[->, very thick](01)--(20);
\draw[->, very thick](02)--(10);
\draw[->, very thick](10)--(21);
\draw[->, very thick](12)--(01);
\draw[->, very thick](21)--(02);
\draw[->, very thick](20)--(12);
\end{tikzpicture}
\end{center}
\begin{center}
$\cdots,\ \ 1,2,\ \ 0,2,\ \ 0,1,\ \ 2,1,\ \ 0,2,\ \ 1,2,\ \ 0,1,\ \ 0,2,\ \ \cdots$
\end{center}
\caption{
Illustration of the avoidance coupling on $K^*_3$ for $s=1/3$.  The states of Alice and Bob are naturally grouped into pairs, so that Alice and Bob are effectively jointly walking on these pairs (state diagram on top).  If the sequence of pairs (bottom) is reversed, and the elements in each pair is reversed, then the law of this new sequence of pairs is the same as for the original sequence.
}
\label{fig:32}
\end{figure}

Turning now to the case $s\geq 1/3$, we can modify the above coupling as
follows. At each round, with a suitable probability let both walkers stay in
place. Otherwise they proceed to the next round.  This clearly increases the
probability that each walker stays in place at any step, without otherwise
changing their trajectories.

Finally we must show that no avoidance coupling is possible if $s<1/3$.
Consider the event that $X_0,\dots,X_n$ alternate between two (unspecified)
states of $K_3^*$. This has probability $2\,(\tfrac{1-s}{2})^n$, since each
jump has probability $(1-s)/2$.
However, this event forces
$Y_0=Y_1=\dots=Y_{n-1}$, which has probability $s^{n-1}$.  Thus
$2\,(\tfrac{1-s}{2})^n \leq s^{n-1}$.
Taking $n$th roots and letting $n\to\infty$ we find $(1-s)/2 \leq s$, so $s\geq 1/3$.
\end{proof}

\begin{proof}[Proof of Theorem~\ref{k3loops}, Markovian case]
Suppose first that there is a Markovian avoidance coupling. Let $p_{ab}$ be
the probability that Alice stays at $a$ given that it is her move, that she
is at $a$, and that Bob is at~$b$.  Let $q_{ab}$ be the probability that Bob
stays at $b$, given that it is Bob's move, and again that Alice is at~$a$ and
Bob at $b$.  That these quantities may only be defined for certain pairs
$a,b$ will not interfere with our arguments.

Suppose Alice has just moved from $0$ to $1$. Her conditional probability of
next moving back to~$0$ is $(1{-}s)/2$.  Bob must have been at $2$ and will
stay there with probability $q_{12}$, after which Alice moves to~$0$ with
probability $1{-}p_{12}$.   We conclude that $(1{-}s)/2 = q_{12}(1{-}p_{12})$.

Similarly, suppose Bob has just moved from $0$ to $2$.  His conditional
probability of next moving back to $0$ is $(1{-}s)/2$.  Alice must have been at
$1$ and will stay there with probability $p_{12}$, after which Bob moves to
$0$ with probability $1{-}q_{12}$.  So we get $(1{-}s)/2 = p_{12}(1{-}q_{12})$.
Combined with the conclusion of the previous paragraph, this gives $p_{12} =
q_{12}$, and similarly $p_{ab} = q_{ab}$ for all $a \neq b$.

Since the equation $p_{12}(1{-}p_{12}) = (1{-}s)/2$ has no real solutions for $s
< \frac12$, the presumed coupling cannot exist in this case.

We now demonstrate that, conversely, when $\frac12 \le s < 1$ there is a
Markovian avoidance coupling for two walkers.  Let $p$ and $1{-}p$ be the two
(possibly equal) values of $x$ satisfying $x(1-x)=(1-s)/2$, and note that then $p^2
+ (1{-}p)^2 = s$. Letting $i'$ stand for $i{+}1 \mod 3$, put $p_{ii'} = p$,
$p_{i'i} = 1{-}p$, and $q_{ij}=p_{ij}$.  We claim that these values are the holding probabilities
$p_{ij},q_{ij}$ (as defined earlier in the proof) of a Markovian avoidance coupling.

To show this, condition on the event that Alice is at $i$ at time $1$.  We
will show that, conditioned also on Bob's position at time $0$, Alice's next
step is to $i$ (respectively $i'$) with the correct probability~$s$
(respectively $(s{-}1)/2$); hence the probability she moves to $i''$ is
correct also.  Since the coupling is Markovian, Alice's future depends on her
past only through $(X_1,Y_0)$, so this will suffice to prove that Alice's
trajectory has the correct law.  By the symmetry of our construction, the
same will then apply to Bob.

Suppose first that $Y_0=i'$, that is, that Bob was at $i'$ one move ago.
Then, with all probabilities conditional on $\{X_1=i, Y_0=i'\}$,
\begin{align*}
\Pr(X_2=i)&=\Pr(Y_1=i')\Pr(X_2=i \mid Y_1=i') + \Pr(Y_1=i'')\Pr(X_2=i \mid Y_1=i'')\\
&=q_{i i'}p_{i i'} + (1{-}q_{i i'})p_{i i''} = p^2 + (1{-}p)^2 = s,
\intertext{and}
\Pr(X_2=i')&=\Pr(Y_1=i'')\Pr(X_2=i' \mid Y_1=i'') = (1{-}q_{i i'})(1{-}p_{i i''}) = (1{-}p)p=\tfrac{1-s}{2}.
\end{align*}
Observe that the coupling is invariant under replacing state $i$ with $-i\bmod 3$, swapping $'$ and $''$, and substituting $1-p$ for $p$.  Since the above conditional probabilities are symmetric in $p$ and $1-p$, it follows that the distribution of $X_2$ conditional on $\{X_1=i, Y_0=i''\}$ is also correct.
\end{proof}

\section{Two walkers for composite \texorpdfstring{\boldmath$n$}{n}}

\begin{figure}[b]
\begin{center}
\tikzset{vertex/.style = {fill, circle, inner sep = 1pt}}
\begin{tikzpicture}
\begin{scope}[shift={(0,0)},scale=0.34]
\foreach\x in{0,1,2} \foreach\y in{0,1,2,3} \node (/x,/y) at (\x,\y) [vertex]{};
\node at (2,1) [vertex,white] {};
\node at (2,1) {A};
\node at (1,3) [vertex,white] {};
\node at (1,3) {B};
\end{scope}
\begin{scope}[shift={(1.63125,0)},scale=0.34]
\foreach\x in{0,1,2} \foreach\y in{0,1,2,3} \node (/x,/y) at (\x,\y) [vertex]{};
\node at (1,2) [vertex,white] {};
\node at (1,2) {A};
\node at (1,3) [vertex,white] {};
\node at (1,3) {B};
\end{scope}
\begin{scope}[shift={(3.2625,0)},scale=0.34]
\foreach\x in{0,1,2} \foreach\y in{0,1,2,3} \node (/x,/y) at (\x,\y) [vertex]{};
\node at (1,2) [vertex,white] {};
\node at (1,2) {A};
\node at (0,1) [vertex,white] {};
\node at (0,1) {B};
\end{scope}
\begin{scope}[shift={(4.89375,0)},scale=0.34]
\foreach\x in{0,1,2} \foreach\y in{0,1,2,3} \node (/x,/y) at (\x,\y) [vertex]{};
\node at (0,3) [vertex,white] {};
\node at (0,3) {A};
\node at (0,1) [vertex,white] {};
\node at (0,1) {B};
\end{scope}
\begin{scope}[shift={(6.525,0)},scale=0.34]
\foreach\x in{0,1,2} \foreach\y in{0,1,2,3} \node (/x,/y) at (\x,\y) [vertex]{};
\node at (0,3) [vertex,white] {};
\node at (0,3) {A};
\node at (2,2) [vertex,white] {};
\node at (2,2) {B};
\end{scope}
\begin{scope}[shift={(8.15625,0)},scale=0.34]
\foreach\x in{0,1,2} \foreach\y in{0,1,2,3} \node (/x,/y) at (\x,\y) [vertex]{};
\node at (0,1) [vertex,white] {};
\node at (0,1) {A};
\node at (2,2) [vertex,white] {};
\node at (2,2) {B};
\end{scope}
\begin{scope}[shift={(9.7875,0)},scale=0.34]
\foreach\x in{0,1,2} \foreach\y in{0,1,2,3} \node (/x,/y) at (\x,\y) [vertex]{};
\node at (0,1) [vertex,white] {};
\node at (0,1) {A};
\node at (2,0) [vertex,white] {};
\node at (2,0) {B};
\end{scope}
\begin{scope}[shift={(11.4188,0)},scale=0.34]
\foreach\x in{0,1,2} \foreach\y in{0,1,2,3} \node (/x,/y) at (\x,\y) [vertex]{};
\node at (2,2) [vertex,white] {};
\node at (2,2) {A};
\node at (2,0) [vertex,white] {};
\node at (2,0) {B};
\end{scope}
\begin{scope}[shift={(13.05,0)},scale=0.34]
\foreach\x in{0,1,2} \foreach\y in{0,1,2,3} \node (/x,/y) at (\x,\y) [vertex]{};
\node at (2,2) [vertex,white] {};
\node at (2,2) {A};
\node at (1,0) [vertex,white] {};
\node at (1,0) {B};
\end{scope}
\end{tikzpicture}
\end{center}
\caption{
Illustration of the avoidance coupling for two walkers on $K_n$ for composite~$n$ in the case $n=3\times 4$.  The ``clusters'' are the columns.  Alice usually moves to Bob's cluster, but sometimes stays in her own.  Bob moves to a new cluster when Alice is in his cluster, and otherwise stays in his cluster.  The time-reversed process, with the roles of Alice and Bob exchanged, is equal in law to the original process.
}
\label{fig:composite}
\end{figure}

\begin{theorem}\label{thm:ab}
  For any composite $n=ab$, where $a,b>1$, there exist Markovian
  avoidance couplings for two walkers on $K_n$ and on $K_n^*$.
\end{theorem}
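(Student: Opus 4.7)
The plan is to partition the vertex set of $K_n=K_{ab}$ into $a$ \emph{clusters} of size $b$, identify each vertex with a pair $(i,j)$ where $i\in\{1,\dots,a\}$ indexes the cluster, and maintain the invariant that the pair $(X_t,Y_t)$ is uniformly distributed over all ordered pairs lying in distinct clusters at each moment just before a walker moves. Equivalently, each marginal is uniform on the $n$ vertices, and conditional on one walker's position the other is uniform on the $(a-1)b$ vertices outside its cluster.

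Following Figure~\ref{fig:composite}, the transition rules I would use are the following. On Alice's turn, with probability $p:=(b-1)/(ab-1)$ she moves to a uniformly random vertex in her own cluster other than her current position, and with probability $1-p$ she moves to a uniformly random vertex in Bob's cluster other than Bob. On Bob's turn, if Alice's new position lies in Bob's cluster he moves to a uniformly random vertex outside that cluster; otherwise he moves within his cluster to a uniformly random vertex other than his current one. This is Markovian by construction, cannot produce a collision, and leaves the walkers in distinct clusters at the end of each round.

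There are three things to verify. First, averaged over the conditional law of $Y_t$ given $X_t$ supplied by the invariant, Alice's move hits each of the $n-1$ other vertices with probability $1/(n-1)$: the per-target probabilities work out to $p/(b-1)$ inside her cluster and $(1-p)/((a-1)b)$ at each vertex outside, and the choice $p=(b-1)/(ab-1)$ is precisely what equates these two numbers to $1/(ab-1)$. Second, a parallel computation, using the fact that after Alice's move the intermediate state has $X_{t+1}$ in Bob's cluster with probability $1-p$, gives Bob the correct uniform marginal on $n-1$ targets. Third, to propagate the invariant, fix $(u,v')$ in distinct clusters and sum over the intermediate value $Y_t=v$: the choices of $v$ in the cluster of $u$ (where Bob then leaves his cluster) contribute $(1-p)/(n(a-1)b)$, the choices of $v$ in the cluster of $v'$ (where Bob stays within his cluster) contribute $p/(n(a-1)b)$, and these sum to $1/(a(a-1)b^{2})$, the required uniform probability on different-cluster pairs. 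To pass from $K_n$ to $K_n^*$ I would prepend to each walker's decision an independent coin that with probability $1/n$ keeps the walker in place and with probability $(n-1)/n$ triggers the rule above; this preserves the uniform marginal on $K_n^*$ and plainly cannot create a collision. The main obstacle is the bookkeeping in the invariant step, which is elementary but must be done carefully; the rest of the design is essentially forced by the requirement that both marginals be correct simple random walks.
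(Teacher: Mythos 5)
Your construction on $K_n$ coincides with the paper's (with the roles of $a$ and $b$ interchanged), but your verification has a genuine gap. What you check is that, under the stationary joint law (uniform over ordered pairs in distinct clusters), the one-step marginals come out right --- $\Pr(X_{t+1}=w\mid X_t=u)=1/(n-1)$, similarly for Bob --- and that this joint law propagates from round to round. Faithfulness requires more: Alice's trajectory must itself be a simple random walk, i.e.\ $\Pr(X_{t+1}=w\mid X_0,\dots,X_t)=1/(n-1)$, and since her move is a function of Bob's current position, you need to know that \emph{conditionally on Alice's entire past trajectory} Bob is still uniform over the $(a-1)b$ vertices outside her cluster; an invariant about the time-$t$ joint law alone does not give this (correct two-dimensional distributions do not determine the law of the process). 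Note the asymmetry: Bob is unproblematic, because conditionally on \emph{any} starting pair in distinct clusters Alice enters his cluster with probability exactly $1-p$, so his new position is uniform over the other $n-1$ vertices given the whole past --- this is precisely the paper's argument, and the paper then handles Alice not by an invariant computation but by observing that the time-reversed process follows the same protocol with the roles of Alice and Bob exchanged. Your route is repairable: strengthen the invariant to the conditional form (given $X_0,\dots,X_t$, the law of $Y_t$ is uniform outside Alice's current cluster) and prove it by induction --- if Alice moves within her cluster the conditioning reveals nothing about $Y_t$, while if she moves to a vertex $u$ of Bob's cluster the conditioning makes $Y_t$ uniform over the other $b-1$ vertices of that cluster, and Bob's subsequent jump to a uniform vertex outside it restores the invariant. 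As written, this key step is missing (and the invariant can in any case only be asserted before Alice's moves, not ``before each walker moves'', since after Alice's move she may share Bob's cluster).

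The $K_n^*$ part is wrong as stated. Prepending an independent $\bern(1/n)$ ``stay'' coin to each walker destroys Bob's marginal: given any round-start configuration, Alice ends her move in Bob's cluster with probability $\tfrac{n-1}{n}(1-p)=\tfrac{(a-1)b}{n}$, so Bob reaches a fixed vertex outside his cluster with probability $\tfrac{n-1}{n}\cdot\tfrac{(a-1)b}{n}\cdot\tfrac{1}{(a-1)b}=\tfrac{n-1}{n^2}<\tfrac1n$, while each non-current vertex of his own cluster gets $\tfrac{n-1}{n}\cdot\tfrac{b}{n}\cdot\tfrac{1}{b-1}=\tfrac{(n-1)b}{n^2(b-1)}>\tfrac1n$. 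The laziness must be built into the rules rather than superimposed independently: in your notation the paper has Alice move to Bob's cluster with probability $\tfrac{a-1}{a}$ and otherwise to a uniform vertex of her own cluster (possibly her current one), with Bob likewise allowed to stay when moving within his cluster; then every vertex, including the current one, is reached with probability exactly $1/n$ and the rest of the analysis goes through.
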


\begin{proof}
  We partition $[n] := \{0,1,\dots,n{-}1\}$ into $b$ ``clusters''
  $S_1,\dots,S_b$ each of size $a$.  We construct a coupling so that,
  when it is Alice's turn to move, she and Bob are in different
  clusters.  (This is where we use $b>1$.)

  For the coupling on $K_n$, Alice's protocol is to move with
  probability $\frac{a(b-1)}{ab-1}$ to a random vertex in Bob's cluster
  (other than Bob's vertex), and move with probability
  $\frac{a-1}{ab-1}$ to a random vertex in her own cluster (other than her
  current vertex).  (This is where we use $a>1$.)  Bob's protocol is
  to move to a random new vertex in his current cluster, unless Alice is
  also in his cluster, in which case he moves to a uniformly random vertex
  in a uniformly random unoccupied cluster.  (After Bob moves, he and
  Alice are once again in different clusters.)

  The coupling for $K_n^*$ is essentially the same, except that
  Alice's probability of moving to Bob's cluster is $\frac{b-1}b$, and
  when either Alice or Bob move within their own cluster, the new vertex
  may be the same as the current vertex.

  Regardless of how Alice and Bob start, after Alice and then Bob
  move, Bob is at a uniformly random new vertex (for the coupling on
  $K_n$) or a uniformly random vertex (for the coupling on~$K_n^*$).
  Thus Bob's walk has the correct distribution.

  When the coupled walks are viewed backwards in time, the protocol is
  the same but with the roles of Alice and Bob reversed.  Thus Alice's
  walk also has the correct distribution.
\end{proof}

Note that when $a=2$, the above coupling has \df{minimum entropy}, meaning
that the entropy of the coupling is equal to the entropy of a single walker.
The coupling does not have minimum entropy for $a>2$; it can, however, be
modified to have minimum entropy, at the cost of giving up the Markovian
property.  Specifically, we can give each cluster the structure of a directed
cycle and insist that when Alice moves to Bob's cluster, she chooses the site
after Bob's; and if she stays in her own cluster, Bob copies Alice's movement
in his own cluster. To copy Alice's movement, Bob needs to remember where
Alice was on her previous turn, which is why this modified coupling is not
Markovian.

\section{Monotonicity}

The purpose of this section is to show that existence of avoidance couplings
for $k$ walkers on $K_n^*$ is monotone in $n$.  We do not know whether the
corresponding statement holds for the unlooped case $K_n$, nor if we impose the
Markovian condition or the minimum entropy condition.

\begin{theorem}\label{mono-n}
  If there is an avoidance coupling of $k$ walkers on $K_n^*$, then
  there is an avoidance coupling of $k$ walkers on $K_{n+1}^*$.
\end{theorem}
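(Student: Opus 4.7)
The plan is to build the $K_{n+1}^*$ coupling by augmenting the given coupling $M$ on $K_n^*$ with a roving \emph{ghost} vertex that is always unoccupied. Label the extra vertex $n$, so the new state space is $\{0,1,\dots,n\}$. At every time step the construction will maintain three pieces of data: a ghost vertex $z\in\{0,\dots,n\}$ not occupied by any walker; a bijection $\phi\colon\{0,\dots,n-1\}\to\{0,\dots,n\}\setminus\{z\}$ identifying the $n$-element active set with $M$'s vertex labels; and \emph{virtual positions} $v_i=\phi^{-1}(a_i)\in\{0,\dots,n-1\}$ for the walkers, where $a_i$ is the actual position in $K_{n+1}^*$. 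Initially $z_0=n$, $\phi_0$ is the identity, and the $v_i$ are drawn from $M$'s initial distribution.

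When it is walker $i$'s turn, I would flip an independent coin that is heads with probability $1/(n+1)$. On heads (``ghost swap'') walker $i$ moves to $z$: set the new actual position to $z$, the new ghost to the old $a_i$, leave $v_i$ unchanged, and modify $\phi$ only at the single point $v_i$ (sending it to the old $z$); here $M$'s state is \emph{not} advanced. On tails (``$M$-move'') I advance $M$ by one turn for walker $i$ to obtain a new virtual $v_i'$, set the new actual position to $\phi(v_i')$, and leave $\phi$ and $z$ unchanged. This preserves the invariants.

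The verification has two parts. \emph{Avoidance} is straightforward: a ghost swap moves walker $i$ to a vertex that was unoccupied by hypothesis, and an $M$-move gives $v_i'$ distinct from the other virtual positions (by $M$'s avoidance) and hence $\phi(v_i')$ distinct from the other actuals. For the \emph{marginal}, conditional on the current state the new actual position equals $z$ with probability $1/(n+1)$ and equals $\phi(v_i')$ with probability $n/(n+1)$, where $v_i'$ is produced by $M$'s transition whose marginal is uniform on $\{0,\dots,n-1\}$; since $\phi$ is a bijection onto the $n$-element active set $\{0,\dots,n\}\setminus\{z\}$, the image $\phi(v_i')$ is uniform on that set, and the two cases combine to give the uniform distribution on $\{0,\dots,n\}$, i.e.\ simple random walk on $K_{n+1}^*$.

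The main obstacle is making the marginal argument genuinely rigorous, because $M$ is a coupling with a specific cyclic schedule and in our construction $M$'s clock is paused whenever a walker opts for a ghost swap, which could in principle cause the interleaving of $M$-moves across walkers to diverge from $M$'s prescribed order. I would handle this by sampling $M$'s entire joint trajectory in advance and then, at each $M$-move in $M'$, reading off the next unused entry of that walker's trajectory; since each individual walker's marginal in $M$ is simple random walk on $K_n^*$ and hence an i.i.d.\ sequence, subsampling in walker-$i$ order preserves the i.i.d.\ uniform marginal. Combined with the randomisation of the initial permutation used to define $\phi_0$ (applying an independent uniform permutation of $\{0,\dots,n\}$ at time zero, as in the remark after Theorem 3.1's statement of initial distributions), this gives the desired simple random walk marginals on $K_{n+1}^*$ for every walker, completing the construction.
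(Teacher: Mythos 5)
There is a genuine gap, and it is fatal to the construction as stated: pausing $M$'s clock separately for each walker destroys exactly the property your avoidance argument relies on. The avoidance guarantee of $M$ is a statement about the walkers' positions under $M$'s synchronized cyclic schedule; once ghost swaps cause different walkers to have consumed different numbers of their $M$-moves, the current virtual positions $(v_1,\dots,v_k)$ are read off at \emph{different} $M$-times, and $M$'s avoidance says nothing about such a mixed configuration. Your patch (pre-sampling $M$ and letting each walker consume its own trajectory at its own rate) salvages at best the single-walker marginal, not the joint exclusion. Concretely, take $n=3$ and the paper's two-walker coupling on $K_3^*$: start Alice at $0$, Bob at $1$, ghost $z=3$, $\phi=\mathrm{id}$. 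Alice flips heads and ghost-swaps: her actual becomes $3$, the ghost becomes $0$, and $\phi(0):=3$, while $v_A$ stays $0$. Bob flips tails; in the pre-sampled run of $M$ the transition from $(0,1)$ may be $(2,0)$, so Bob's next unused entry is $Y_1=0$, giving $v_B=0=v_A$ and actual position $\phi(0)=3$, which is Alice's current actual position --- a collision, occurring with positive probability. (The marginal argument has a related conditioning problem: walker $i$'s next unused $M$-entry is correlated with the other walkers' already-consumed entries, which determine the current $\phi$ and $z$, so ``$v_i'$ is uniform given the current state of the new chain'' is not justified; but the avoidance failure above is already decisive.)

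The paper avoids this trap by never pausing the $K_n^*$ coupling: all walkers follow it at full speed at all times, and the new vertex is handled by an \emph{independent} auxiliary process. From the given coupling one extracts a ``1-avoidance'' coupling of $k$ $\bern(1/n)$ indicator sequences (no two walkers' indicators equal $1$ simultaneously), thins it to $\bern(1/(n+1))$, runs it independently of the $K_n^*$ coupling, and declares a walker to be at vertex $n+1$ exactly when its indicator is $1$, and at its $K_n^*$ position otherwise. The crucial points your construction is missing are (i) the excursions to the extra vertex are governed by a \emph{jointly} coupled indicator process guaranteeing mutual exclusion at the new vertex, rather than independent per-walker coins with a roving ghost, and (ii) the underlying $K_n^*$ coupling keeps its own schedule intact, so its avoidance property applies verbatim to the positions used; the marginal then follows from the independence of the two ingredients and the fact that the $K_n^*$ walk is i.i.d.\ uniform. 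If you want to repair your proof, you would need either to keep all walkers' $M$-moves synchronized (advancing $M$ even when a walker visits the extra vertex, which is essentially the paper's construction) or to prove an avoidance property of $M$ across desynchronized times, which is false in general.
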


The following concept will be useful for the proof.  Suppose that $k$ walkers
walk on $K_2^*$, taking turns in cyclic order as usual, in such a way that no
two walkers are simultaneously at vertex $1$ (but several walkers can be at
$2$), and so that the trajectory of any given walker is an i.i.d.\ Bernoulli
sequence that is $1$ with probability $p$ at each step.  We call such a
coupling a \df{1-avoidance} coupling of $\bern(p)$ walkers.

\begin{lemma}\label{1-over-n}
  If there is an avoidance coupling of $k$ walkers on $K_n^*$, then
  there is a 1-avoidance coupling of $k$ $\bern(1/n)$ walkers.
\end{lemma}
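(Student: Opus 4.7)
My plan is to construct the desired 1-avoidance coupling by projecting the given avoidance coupling on $K_n^*$ down to $K_2^*$ in the most direct way. Fix a distinguished vertex $v \in [n]$, and define the projection $\pi\colon [n] \to \{1,2\}$ sending $v$ to $1$ and every other vertex to $2$. The 1-avoidance coupling will simply be the image, under $\pi$ applied coordinatewise, of the hypothesized coupling, with the walkers taking turns in the same cyclic order as before.

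The first thing to verify is the marginal law. A simple random walk on $K_n^*$ steps to a uniformly random vertex of $[n]$ regardless of its current location (because of the self-loops at every vertex), so each individual trajectory is an i.i.d.\ uniform sequence on $[n]$; starting from the stationary distribution costs nothing here. Applying $\pi$ to such a sequence yields an i.i.d.\ $\bern(1/n)$ sequence, which is exactly the marginal required of a walker on $K_2^*$ in the definition of a 1-avoidance coupling.

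Next I would verify the avoidance constraint. The hypothesized coupling on $K_n^*$ forbids two walkers from simultaneously occupying the same vertex of $K_n^*$; in particular, at most one walker is ever at $v$ at any given time. Under $\pi$ this says that at most one projected walker is at vertex $1$ of $K_2^*$ at any given time, which is precisely the 1-avoidance condition. Several projected walkers may well coincide at vertex $2$ of $K_2^*$, because they correspond to distinct vertices of $[n]\setminus\{v\}$, but the definition of a 1-avoidance coupling explicitly permits this.

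There is no real obstacle here: the lemma amounts to a relabelling. The only mild subtlety worth stating cleanly is that the walk on the looped complete graph is genuinely i.i.d.\ (not merely Markov), which is what makes the projected sequence an honest $\bern(1/n)$ walker on $K_2^*$ rather than something with extra temporal dependence. With that observation in hand the proof is essentially a single sentence.
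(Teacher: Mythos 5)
Your proposal is correct and is essentially the paper's own proof: the paper also defines the Bernoulli walker to be at $1$ exactly when the corresponding walker on $K_n^*$ is at a distinguished vertex, the i.i.d.\ uniform nature of the walk on $K_n^*$ giving the $\bern(1/n)$ marginal and the avoidance on $K_n^*$ giving 1-avoidance. Your write-up just spells out the routine verifications that the paper leaves implicit.
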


\begin{proof}
Let any given Bernoulli walker be at $1$ exactly when the
corresponding walker on $K_n^*$ is at vertex~$1$.
\end{proof}

\begin{lemma}\label{mono-p}
  If there is a 1-avoidance coupling of $k$ $\bern(p)$ walkers, then
  there is a 1-avoidance coupling of $k$ $\bern(q)$ walkers for all
  $q<p$.
\end{lemma}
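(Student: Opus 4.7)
The plan is to construct the $q$-coupling from the $p$-coupling by \emph{independent thinning} of each walker's visits to vertex~$1$. Concretely, start with the given $1$-avoidance coupling of $k$ $\bern(p)$ walkers on $K_2^*$, and introduce an independent family $\{U_{i,t}\}$ of i.i.d.\ Uniform$[0,1]$ variables, indexed by walker $i \in \{1,\dots,k\}$ and by the index $t$ of that walker's move. Define new trajectories by setting walker $i$'s new position at her $t$-th move equal to $1$ if her old position there was $1$ \emph{and} $U_{i,t} \le q/p$, and equal to $2$ otherwise.

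Next I would verify the two required properties. For the marginal distribution, note that each walker's original sequence is i.i.d.\ $\bern(p)$, and we are independently, at each step, flipping each $1$ to $2$ with probability $1 - q/p$, keeping every $2$ as $2$, and leaving the thinning coins independent across time. So the new sequence is i.i.d., and at each step it equals $1$ with probability $p \cdot (q/p) = q$, as desired. The avoidance property is even easier: the thinning procedure never sends a walker from $2$ to $1$, so the set of times at which walker $i$ sits at $1$ in the new process is contained in the corresponding set for the old process. Since the old process has at most one walker at $1$ at any instant, so does the new one.

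Because the lemma does not ask for the Markov property (it is stated purely about $1$-avoidance couplings), there is essentially no obstacle: the main content is simply observing that Bernoulli thinning respects both the marginal law and the one-sided avoidance constraint. The one point worth a line of care is that the thinning coins $U_{i,t}$ must be independent of each other and of the given coupling, so that the individual walker's i.i.d.\ structure in time is preserved; this is automatic by construction on the enlarged probability space.
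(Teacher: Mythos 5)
Your proposal is correct and follows essentially the same argument as the paper: independent Bernoulli thinning of the visits to vertex $1$, with one coin of success probability $q/p$ per walker-move (the paper indexes the coins by turns, which amounts to the same thing), and the same two observations that thinning preserves the i.i.d.\ marginal law and can only shrink the set of times a walker is at $1$.
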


\begin{proof}
We simply thin the process of $1$s.  Suppose we have a $\bern(p)$ coupling,
and take an independent process of i.i.d.\ coin flips, heads with probability
$q/p$, indexed by turns (i.e.\ times at which any walker is allowed to move).
To get the $\bern(q)$ coupling, take a walker to be at $1$ whenever the
original walker is at~$1$ and the corresponding coin flip is heads.
\end{proof}

\begin{proof}[Proof of Theorem~\ref{mono-n}]
Suppose we have an avoidance coupling of $k$ walkers on
$K_n^*$.  By Lemmas~\ref{1-over-n} and~\ref{mono-p}, there
exists a 1-avoidance coupling of $k$ $\bern(1/(n+1))$
walkers.  Take such a coupling, independent of the original
coupling on $K_n^*$.  To get a coupling on $K_{n+1}^*$, take a
given walker to be at the same vertex as the corresponding
walker on $K_n^*$, unless the corresponding Bernoulli walker is
at~$1$, in which case take it to be at vertex $n+1$.
\end{proof}

\section{Linear number of walkers for special \texorpdfstring{\boldmath$n$}{n}}

\begin{theorem}\label{L:linear}
  There exists a minimum-entropy Markovian avoidance coupling for $k$ walkers on $K_n^*$ for any $k\leq2^d$ and
  any $n=2^{d+1}$ or $2^{d+1}+1$, as well as on $K_n$ for $n=2^{d+1}+1$.
\end{theorem}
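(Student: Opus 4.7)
I plan to prove Theorem~\ref{L:linear} by induction on $d$, recursively combining two copies of the previously constructed coupling with the composite mechanism of Theorem~\ref{thm:ab}. The base case $d=0$ is trivial: a single walker making uniform random moves gives a minimum-entropy Markovian coupling on $K_2^*$, $K_3^*$, or $K_3$.

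For the inductive step on $K_{2^{d+1}}^*$ with $k=2^d$ walkers, partition the $2^{d+1}$ vertices into two clusters $A,B$ of size $2^d$, place $2^{d-1}$ walkers in each, and run the inductive minimum-entropy coupling for $2^{d-1}$ walkers on $K_{2^d}^*$ inside each cluster. The two sub-couplings are synchronised so that both are driven by the same random bits, contributing $d$ bits per round to the combined intra-cluster randomness instead of $2d$. One additional random bit per round then controls an inter-cluster exchange modelled on the $a=2^d$, $b=2$ case of Theorem~\ref{thm:ab}, where the two clusters play the role of the two ``super-walkers''. The total entropy is $d+1=\log_2 2^{d+1}$ bits per round, matching a single walker's per-step entropy on $K_{2^{d+1}}^*$ and so achieving the minimum.

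The variants on $K_{2^{d+1}+1}^*$ and $K_{2^{d+1}+1}$ are handled by augmenting the above construction with the extra vertex, which serves as a buffer during inter-cluster transitions; the odd value of $n$ in the unlooped case supports a cyclic structure that substitutes for the missing self-loops. For $k<2^d$ the coupling can be obtained from the $k=2^d$ construction by dropping walkers (after checking that Markovianness is preserved on the reduced state space) or by recursing at a smaller $d$.

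The main obstacle will be implementing the cross-cluster exchange compatibly with the fixed cyclic turn order: each ``super-walker'' is itself a group of $2^{d-1}$ walkers, so the composite-style swap must be realised as a specific sequence of individual walker moves within the round, while preserving Markovianness and collision-freeness. The delicate point will be verifying that each walker's marginal is uniform on all of $K_n^*$ rather than just on her current cluster; this forces the inter-cluster exchange to occur with precisely the right probability and to mix cluster assignments across walkers over time.
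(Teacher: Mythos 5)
Your plan defers exactly the steps that constitute the theorem. The paper does not build the coupling by induction with a cluster-exchange step; it gives a single explicit algebraic construction: walkers are indexed by the vertices $j$ of the $d$-dimensional hypercube, positions on $\Z_n$ are tied together by $X^{(j)}_t = X^{(0)}_t + \sum_i j_i \eps^{(i)}_t 2^i$ with shared sign bits $\eps^{(i)}_t$, and the round is closed by $X^{(0)}_{t+1} = X^{(\omega)}_t + 2^d + \delta_{t+1}$. With this formula each walker's increment is $2^d+\delta_{t+1}+\sum_i[(1{-}j_i)\eps^{(i)}_t + j_i\eps^{(i)}_{t+1}]2^i$, which is uniform on $\{1,\dots,2^{d+1}\}$, so uniform marginals, avoidance (a parity/size argument on the differences), Markovianness and the $d{+}1$-bits-per-round entropy bound all drop out of the same identity. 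Your proposal never produces an analogue of this identity, and the two issues you flag as ``obstacles'' --- realising the exchange within the cyclic turn order and verifying uniform marginals on all of $K_n^*$ --- are precisely where the content of the theorem lives.

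Moreover, there is a concrete reason to doubt the sketch can be completed as stated. Under the minimum-entropy constraint you have no fresh randomness to place a walker who switches clusters, so her landing vertex must be a deterministic function of the configuration; but because you drive both clusters with the \emph{same} bits, the two intra-cluster configurations are strongly correlated (essentially identical up to a relabelling), so any fixed bijection used for the exchange tends to map the $A$-walkers onto vertices already occupied by $B$-walkers rather than onto their complement --- you would need the occupied set of the sub-coupling to have a special complementation structure that the inductive hypothesis does not supply. The appeal to Theorem~\ref{thm:ab} does not help here: that mechanism is for two \emph{individual} walkers, is asymmetric between Alice and Bob, and its correctness proof goes through a time-reversal argument that does not obviously transfer to two groups of $2^{d-1}$ walkers acting as ``super-walkers.'' Likewise, the treatment of $K_{2^{d+1}+1}$ via a ``buffer vertex'' and the claim that dropping walkers preserves Markovianness are asserted rather than proved (in the paper the latter requires keeping walkers $0$ and $\omega$ and removing non-powers of $2$ first, and the looped odd case is handled by inserting resting waves detectable from walker $0$'s move). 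As it stands the proposal is a programme whose hard steps are open, not a proof.
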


The avoidance coupling of $2^d$ walkers on $K_{2^{d+1}+1}$ is illustrated in Figure~\ref{fig:linear}.

\begin{corollary}
There exists an avoidance coupling for $k$ walkers on $K_n^*$ for any $k\le
n/4$.
\end{corollary}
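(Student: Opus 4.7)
The plan is to combine Theorem~\ref{L:linear} with the monotonicity result Theorem~\ref{mono-n}. Theorem~\ref{L:linear} gives avoidance couplings of $k$ walkers on $K_{2^{d+1}}^*$ whenever $k \le 2^d$, and Theorem~\ref{mono-n} lets us inflate the state space arbitrarily, so the task reduces to finding, for each $k \le n/4$, a suitable power-of-two value $2^{d+1} \le n$ with $2^d \ge k$.

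First I would let $d$ be the least non-negative integer with $2^d \ge k$, so that $2^{d-1} < k$ and hence $2^{d+1} < 4k$. The hypothesis $k \le n/4$ then gives $2^{d+1} < n$, in particular $2^{d+1} \le n$. Next I would invoke Theorem~\ref{L:linear} at this $d$ to produce a (minimum-entropy Markovian) avoidance coupling of $k$ walkers on $K_{2^{d+1}}^*$. Finally, applying Theorem~\ref{mono-n} iteratively $n - 2^{d+1}$ times yields an avoidance coupling of $k$ walkers on $K_n^*$.

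There is essentially no obstacle: the arithmetic verification $2^{d+1} \le n$ is the only content, and it relies on the fact that the worst case in approximating $k$ by a power of two is $k = 2^{d-1}+1$, which forces $2^{d+1} \approx 4k$, matching the constant $1/4$ in the corollary. The trivial case $k = 1$ needs no coupling argument at all (one walker trivially avoids itself on any $K_n^*$ with $n \ge 1$), and I would mention this to keep the choice of $d$ well-defined.
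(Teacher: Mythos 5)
Your proposal is correct and is exactly the paper's argument: the paper derives the corollary directly from Theorem~\ref{L:linear} (a coupling of $k\le 2^d$ walkers on $K_{2^{d+1}}^*$) together with the monotonicity result Theorem~\ref{mono-n}, and your choice of $d$ and the arithmetic $2^{d+1}<4k\le n$ just spell out the "immediate" step. Nothing is missing.
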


\begin{proof}
This is immediate from Theorem~\ref{L:linear} and the monotonicity result,
Theorem~\ref{mono-n}.
\end{proof}

\begin{figure}[b!]
\centerline{\includegraphics[width=\textwidth]{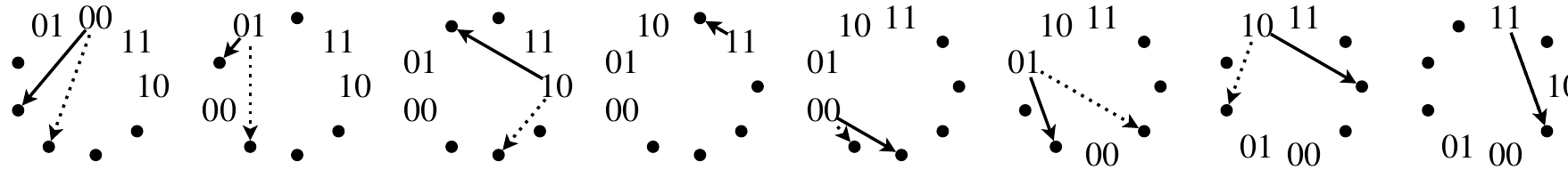}}
\vspace{24pt}

\centerline{\includegraphics[width=\textwidth]{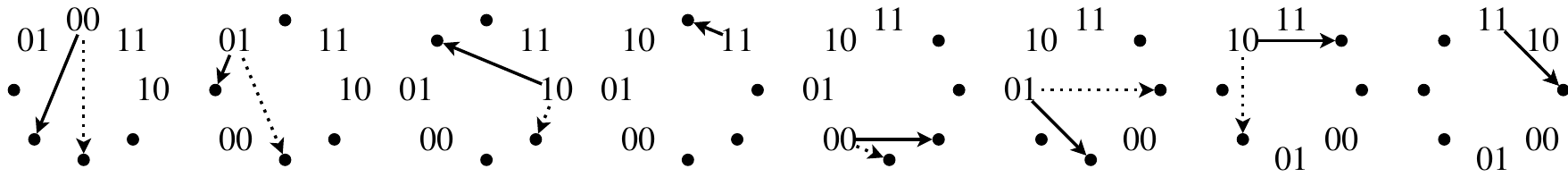}}
\caption{Avoidance coupling of $k=2^d$ walkers ($d=2$) on $K_{2^{d+1}+1}$ (upper panel) and $K^*_{2^{d+1}}$ (lower panel).  The walkers are naturally indexed by the vertices of a hypercube, while the states are naturally indexed by the cycle.  Walker $0$ and walkers $2^i$ each flip a coin to randomly choose from among two states, shown by the arrows, while the motions of the other walkers are determined by the motions of these walkers.  In each round there are a total of $d+1$ coin flips, which is the minimum amount of randomness required for a random walk on $K_{2^{d+1}+1}$ or $K^*_{2^{d+1}}$.  The avoidance coupling on $K^*_{2^{d+1}+1}$ is similar to the coupling on $K_{2^{d+1}+1}$ except that the walkers sometimes stay in place, and when they do, they stay in waves.}
\label{fig:linear}
\end{figure}

\begin{proof}[Proof of Theorem~\ref{L:linear}]
  We begin with the case of $2^d$ walkers, labeled $0,\dots,2^d{-}1 =
  \omega$ on $K_n^*$ for $n=2^{d+1}$ or $n=2^{d+1}+1$. Let $\eps^{(i)}_t$ be
  independent uniform $\{\pm1\}$ random variables. Let $\delta_t$ be
  independent uniform $\{0,1\}$ random variables. Let the trajectory of walker $j$ be
  denoted $\{X^{(j)}_t\}$.

  Let $\sum_{i=0}^{d-1} j_i2^i$ be the binary representation of $j$, for $0 \le j \le \omega$.
  Given $X^{(0)}_t$, the positions of the other walkers are given by
\[
X^{(j)}_t = X^{(0)}_t + \sum_i j_i \eps^{(i)}_t 2^i
\]
where all positions are understood modulo $n$.  We then define inductively
\[
X^{(0)}_{t+1} = X^{(\omega)}_t + 2^d + \delta_{t+1}~.
\]

  We first show that this is indeed a coupling of random walkers.  For all $j$, we have
  $X^{(\omega)}_t = X^{(j)}_t + \sum (1{-}j_i) \eps^{(i)}_t 2^i$ and so
  \begin{equation} \label{eq:Xjt}
  X^{(j)}_{t+1} - X^{(j)}_t = 2^d + \delta_{t+1} + \sum_i \bigl[(1{-}j_i) \eps^{(i)}_t + j_i \eps^{(i)}_{t+1}\bigr] 2^i.
  \end{equation}
  Since $(1{-}j_i) \eps^{(i)}_t + j_i \eps^{(i)}_{t+1}$ is $\pm1$ we find that
  the sum is uniform on the odd numbers in $[-2^d,2^d]$, and so
  $X^{(j)}_{t+1} - X^{(j)}_t$ is uniform on $[1,2^{d+1}]$, as needed for the
  walk on $K_{2^{d+1}}^*$ or on $K_{2^{d+1}+1}$.  The process
  $(X^{(j)}_t)$ is Markov since the $\eps$'s and $\delta$'s used to define
  $X^{(j)}_{t+1}$ in terms of $X^{(j)}_{t}$ in~\eqref{eq:Xjt} are disjoint
  from those used in any other time step.  Note that the $j$th
  trajectory determines all the bits, so this is also a minimum
  entropy coupling.

  Next we establish avoidance. Let $j<j'$ be two walkers. We have
  \[
  \Delta := X^{(j')}_t - X^{(j)}_t = \sum_i (j'_i-j_i) \eps^{(i)}_t 2^i.
  \]
  Note that $|j'_i-j_i|\leq1$, hence $|\Delta| < 2^d$, and so
  $\Delta =0\mod n$ implies $\Delta=0$. If $i_0$ is the minimal index such that
  $j_{i_0}\neq j'_{i_0}$ then $\Delta$ is divisible by $2^{i_0}$ but not by
  $2^{i_0+1}$, and so is non-zero. Thus there are no collisions within any
  round. Between consecutive rounds we have
  \begin{equation}
    \label{eq:delta_sum}
    \Delta := X^{(j)}_{t+1} - X^{(j')}_t = 2^d+\delta_{t+1} + \sum_i
    \bigl[j_i\eps^{(i)}_{t+1} - (1{-}j'_i)\eps^{(i)}_t\bigr] 2^i.
  \end{equation}
  Let $i_1$ be maximal such that $j_{i_1}\neq j'_{i_1}$. Since $j<j'$ this
  implies $j_{i_1}=0$ and $j'_{i_1}=1$. We have
  \[
  \bigl|j_i\eps^{(i)}_{t+1} - (1{-}j'_i)\eps^{(i)}_t\bigr| \leq \begin{cases}
    1 & i>i_1, \\ 0 & i=i_1, \\ 2 & i<i_1. \end{cases}
  \]
  Terms for $i>i_1$ contribute at most $2^d-2^{i_1+1}$ in absolute value to the sum in
  \eqref{eq:delta_sum}, while terms for $i<i_1$ contribute at most
  $2(2^{i_1}-1)$. Thus
  \[
  \Delta \in [2+\delta_{t+1},2^{d+1}-2+\delta_{t+1}]
  \]
  and so $\Delta\neq0 \mod n$.

  To see that this coupling is Markovian, note that $X^{(0)}_t$ is determined by
  $X^{(\omega)}_{t-1}$ and $\delta_t$. Similarly, $X^{(2^i)}_t$ is determined by
  $X^{(0)}_t$ and $\eps_t^{(i)}$, and the position of any other walker $X^{(j)}_t$
  (i.e., for $j$ not a power of $2$) is determined by the
  positions in that round of walkers with smaller index.

We can reduce the number of walkers to any value between $2$ and $2^d$ by simply
removing walkers other than $0$ and $\omega$.  The Markovian property is
preserved if we first remove walkers whose indices are not powers of $2$.

Finally we turn to the case of $k$ walkers on $K_n^*$ for $n=2^{d+1}+1$. To
do this we simply add to the coupling on $K_n$ rounds in which all walkers
rest, beginning with walker 0.  For the Markovian property, we need to ensure
that each walker $j \not= 0$ can detect when walker 0 has decided to rest.
This is so because on $K_n$, given $X^{(\omega)}_t$, no vertex is a possible
value for both $X^{(0)}_t$ and $X^{(0)}_{t+1}$ (otherwise $X^{(0)}_{t+1}$, which depends
only on $X_t^{(\omega)}$ and $\delta_{t+1}$ but not on $X^{(0)}_t$, might stay in
place).
\end{proof}

We say that an avoidance coupling of $k$ walkers \df{stays in waves} if, for
some distinguished walker~$w$, whenever $w$ stays in place, all the other
walkers do likewise at the following $k{-}1$ turns, while if $w$ moves, all
others do so too. (The coupling on $K_{2^{d+1}+1}^*$ in the last proof stays
in waves.)  Note that any Markovian avoidance coupling that stays in waves on
$K_n^*$ may be modified to obtain a Markovian avoidance coupling on $K_n$ by
removing all the looping rounds.

\section{Many walkers for general \texorpdfstring{\boldmath$n$}{n}}

\begin{theorem}\label{T:almost_linear}
  There exists a Markovian avoidance coupling of $k$ walkers on $K_n^*$ for any $k \leq
  n/(8\log_2 n)$, and on $K_n$ for any $k \leq n/(56\log_2 n)$.
\end{theorem}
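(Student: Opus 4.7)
The plan is to treat Theorem~\ref{L:linear} (which gives a minimum-entropy Markovian avoidance coupling of $k \leq 2^d$ walkers on $K_m^*$ for $m$ of the form $2^{d+1}$ or $2^{d+1}+1$) as a black box, and lift it to $K_n^*$ for arbitrary $n$. The core step is a \emph{lifting lemma}: given a Markovian avoidance coupling of $k$ walkers on $K_m^*$ and an integer $q \geq 1$, there is a Markovian avoidance coupling of $k$ walkers on $K_{qm}^*$. I would identify $[qm] \cong [q] \times [m]$ and declare walker $w$'s position at round $t$ to be the pair $(I_t, j_t^{(w)})$, where the block index $I_t$ is a single shared random walk on $K_q^*$ updated at walker~$0$'s turn (and visible to later walkers through walker~$0$'s current position), while the in-block coordinates $j^{(w)}_t$ evolve by the $K_m^*$-coupling with independent randomness. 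Each walker's marginal is then a faithful $K_{qm}^*$-walk, since $I$ and the $j$'s are independent and each uniform on its factor. Avoidance holds because, at any walker's turn, either the new block index differs from an adversary walker's current block index (trivial avoidance) or it does not, in which case the $K_m^*$-coupling's avoidance gives distinctness of the in-block coordinates.

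Combined with Theorem~\ref{L:linear}, this yields the result whenever $n$ is divisible by some $m \in \{2^{d+1}, 2^{d+1}+1\}$ with $2^d \geq k$. For the theorem's bound, I would pick $m$ roughly of size $n/(2\log_2 n)$ (a power of two of that size, say), giving $k \leq m/4 \leq n/(8\log_2 n)$ walkers and $q \geq 2\log_2 n$ blocks. The principal obstacle is that for arbitrary $n$ there may be no $m$ of the required form that exactly divides $n$, so the lifting lemma cannot be applied as stated. I would handle this by generalizing the lifting to allow a partial final block of size $r = n - (q-1)m < m$, together with a correspondingly biased transition rule for $I_t$ that keeps each walker's marginal law on $[n]$ uniform while still respecting the turn order; the slack $q \geq 2\log_2 n$ is exactly what makes this correction Markovianly implementable without breaking avoidance, and is the source of the $\log_2 n$ factor in the theorem.

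Finally, for the $K_n$ (unlooped) bound, I would arrange the construction so that it stays in waves in the sense of the remark after Theorem~\ref{L:linear}, and then delete all looping rounds to obtain a Markovian avoidance coupling on $K_n$. The extra care needed to preserve the waves property through the lifting (together with the handling of looping rounds within the remainder block) accounts for the weaker constant $56$ in the $K_n$ bound.
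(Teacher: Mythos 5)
Your lifting lemma in the exact-divisibility case is sound---it is just the paper's product construction (Lemma~\ref{lemma:product}) applied with a single walker on the block factor $K_q^*$, and it preserves the Markov property. The genuine gap is the step you rely on for arbitrary $n$: the ``partial final block'' of size $r = n-(q-1)m$. Because the block index $I_t$ is shared by all $k$ walkers, every walker occupies the same block at every round; and since each walker's marginal must be a faithful walk on $K_n^*$, each walker must visit the $r$ vertices of the partial block with positive probability, so at some rounds all $k$ walkers must sit in the partial block simultaneously. If $r<k$ this is impossible outright, and if $r\geq k$ you would need an avoidance coupling of $k$ walkers on $K_r^*$ for an arbitrary remainder $r$---which is exactly the statement being proved, so the argument is circular. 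No biased transition rule for $I_t$ can repair this: bias only changes how often the partial block is entered, not the fact that it must be entered by all walkers at once. The slack $q\geq 2\log_2 n$ does not address this obstruction, and it is not the true source of the $\log_2 n$ factor.

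The paper resolves the remainder additively rather than multiplicatively: Lemma~\ref{lemma:m+n} combines $k$-walker couplings on clusters of sizes $m$ and $n$ into one on $K_{m+n}^*$, with the walkers migrating between clusters as a group, so \emph{every} cluster in the partition must itself support a $k$-walker coupling. The number-theoretic step is to write $n=\sum_i n_i(2^{d+2}+2^i)+r$ along the binary expansion of $n$, with $r$ a nonnegative multiple of $2^{d+1}$; each summand $2^{d+2}+2^i=2^{i+1}(2^{d-i+1}+1)$ is a product of admissible factors and hence carries $2^d$ walkers by Theorem~\ref{L:linear} and Lemma~\ref{lemma:product}, and the need for roughly $\log_2 n$ summands of size about $2^{d+2}$ is where the $n/(8\log_2 n)$ bound comes from. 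The unlooped case is more delicate still: one must verify that the constituent couplings stay in waves through both the sum and product operations, build an additively closed set of good sizes containing a full interval of length $2^{2d}$ so that all sufficiently large $n$ are covered, and only then delete the looping rounds; your sketch asserts this can be arranged, but that bookkeeping is the substance of the $K_n$ half of the proof and is missing, as is any derivation of the constant $56$.
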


The constants in this theorem can easily be improved. However, as noted
below, our methods will not go beyond $n/(\log_2 n)$.  To prove the theorem,
we make use of two lemmas which allow us to combine avoidance couplings.

\begin{lemma}\label{lemma:product}
  Suppose that we have avoidance couplings of $r$ walkers on $K_m^*$
  and of $s$~walkers on $K_n^*$.  Then there is an avoidance coupling
  of $k$ walkers on $K_{m n}^*$, for any $k$ satisfying $r+s-1\leq k\leq r s$.
  If the given couplings are Markovian,
  then so is the new coupling.  If the given couplings stay in waves,
  then so does the new coupling.  If the given couplings are
  minimum-entropy, then the new coupling is too.
\end{lemma}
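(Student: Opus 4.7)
The natural approach is to identify the vertex set of $K_{mn}^*$ with $[m]\times[n]$ and run the two given couplings in parallel and independently: the $r$-walker coupling on $K_m^*$, with walkers $Y^{(1)},\dots,Y^{(r)}$, drives the first coordinate, and the $s$-walker coupling on $K_n^*$, with walkers $Z^{(1)},\dots,Z^{(s)}$, drives the second. Each of the $k$ walkers on $K_{mn}^*$ will be labeled by a distinct pair $(a_\ell,b_\ell)\in[r]\times[s]$ (which uses $k\leq rs$), and at the appropriate sampling times its position will be declared to be $(Y^{(a_\ell)},Z^{(b_\ell)})$.

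Three main claims then follow almost for free. First, each individual trajectory is a simple random walk on $K_{mn}^*$, because its two coordinates are independent simple random walks on $K_m^*$ and $K_n^*$ respectively, and the uniform measure on $[m]\times[n]$ factors as the product of its two uniform marginals. Second, avoidance is immediate from distinctness of labels: if $\ell\ne\ell'$, then either $a_\ell\ne a_{\ell'}$ and the first coordinates differ at all times by avoidance on $K_m^*$, or $b_\ell\ne b_{\ell'}$ and the second coordinates differ by avoidance on $K_n^*$. Third, the Markovian, minimum-entropy, and staying-in-waves properties transfer from the two factors to the product because the product coupling introduces no new randomness or state beyond that of the two inputs and the turn indicator.

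The main technical obstacle will be the turn structure: the $k$ walkers on $K_{mn}^*$ must move in a fixed cyclic order, and when walker $\ell$ moves only its position changes, while its new position must be uniform in $[m]\times[n]$. This forces a synchronization between the cycle of length $k$ on $K_{mn}^*$ and the fixed cycles of length $r$ and $s$ in the two input couplings, so that walker $\ell$'s turn coincides with an advance of $Y^{(a_\ell)}$ and of $Z^{(b_\ell)}$ and no other walker's coordinates move at that step. I would build this schedule by choosing the cyclic order of the labels on $K_{mn}^*$ and interleaving the $Y$- and $Z$-update steps across a suitable macro-round; the hypothesis $r+s-1\le k$ supplies exactly enough labels (covering every row and column of the $[r]\times[s]$ grid) to make such a schedule feasible for every $k$ in the stated range, and checking that this schedule delivers the correct marginals while preserving the auxiliary properties is where the bulk of the work sits.
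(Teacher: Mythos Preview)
Your product construction is exactly the paper's: identify $K_{mn}^*$ with $K_m^*\times K_n^*$, label walkers by pairs $(i,j)$, and set the trajectory of walker $(i,j)$ to be $(X^{(i)}_t,Y^{(j)}_t)$. Your arguments for the correct marginals and for avoidance are right.

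Where you go astray is in the last paragraph. You treat the turn structure as a genuine obstacle, imagining that each move of walker $\ell$ must trigger a fresh advance of both $X^{(a_\ell)}$ and $Z^{(b_\ell)}$, and then look for a clever schedule reconciling $k$ turns with $r$ and $s$ factor-advances per round. That is not how the product works. In one round of the $K_{mn}^*$ coupling, each $X^{(i)}$ advances exactly once and each $Y^{(j)}$ advances exactly once; the paper simply takes the walkers in \emph{lexicographic} order $(1,1),(1,2),\dots,(1,s),(2,1),\dots,(r,s)$, so $X^{(i)}$ advances when $(i,1)$ moves and $Y^{(j)}$ advances when $(1,j)$ moves. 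For any other $(i,j)$ the new position $(X^{(i)}_t,Y^{(j)}_t)$ is already determined by earlier moves in the round---no new randomness is drawn, yet the marginal trajectory of $(i,j)$ is still a simple random walk because it depends on its own past only through $(X^{(i)}_{t-1},Y^{(j)}_{t-1})$. So there is no scheduling problem to solve.

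Correspondingly, your explanation of the constraint $k\ge r+s-1$ is off. It is not needed to make a schedule ``feasible''; it arises only when you thin the $rs$ walkers down to $k$ while preserving the \emph{Markovian} property. You must keep the walkers $(i,1)$ and $(1,j)$ for all $i,j$---that is $r+s-1$ walkers forming an L-shape---because their positions encode all of $X^{(1)}_t,\dots,X^{(r)}_t,Y^{(1)}_t,\dots,Y^{(s)}_t$; any other walker's move is then a function of the current configuration. Dropping below $r+s-1$ would lose this, not break avoidance.
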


\begin{proof}
  We identify $K^*_{mn}$ with $K^*_m\times K^*_n$, and note that if
  $X_t$ and $Y_t$ are independent random walks on $K^*_m$ and $K^*_n$
  respectively, then $(X_t,Y_t)$ is a random walk on $K^*_{mn}$.
  Given an avoidance coupling $\{X^{(i)}_t\}$ of $r$ walkers on $K^*_m$
  and an independent avoidance coupling $\{Y^{(j)}_t\}$ of $s$ walkers on
  $K^*_n$, we construct a coupling on $K^*_{mn}$ of $r s$ walkers with
  labels $(i,j)$, for $1\leq i\leq r$ and $1\leq j\leq s$.  The
  walkers move in lexicographic order.  The trajectory of walker
  $(i,j)$ is given by $(X^{(i)}_t,Y^{(j)}_t)$, which as noted above is a
  random walk on $K_{mn}^*$.  That the walkers avoid collisions follows
  from the product construction and the collision avoidance of the
  given couplings.  If the given couplings are Markovian, then since
  the walkers on $K_{mn}^*$ move in lexicographic order, the resulting
  coupling is also Markovian.  It is clear that the coupling stays in
  waves provided both original couplings do.  Finally, no randomness
  is required beyond that in the couplings on $K_m^*$ and on $K_n^*$,
  so if they are minimum entropy, so is the resulting coupling.

  To construct a coupling of fewer walkers, just eliminate some of the
  walkers, as long as walkers $(i,1)$ and $(1,j)$ (for each $1\leq
  i\leq r$ and $1\leq j\leq s$) are kept. All other trajectories are
  determined by those, so the Markov property is maintained.
\end{proof}

We remark that a variant of the above construction can be used to combine an
avoidance coupling of $r$ walkers on $K_m$ and an avoidance coupling of $s$
walkers on $K_n^*$ that stays in waves to produce an avoidance coupling of
$rs$ walkers on $K_{mn}$.

\begin{lemma}\label{lemma:m+n}
  Suppose we have avoidance couplings for $k$ walkers on $K_m$ and on
  $K_n$ (respectively, $K_m^*$ and $K_n^*$). Then we have an avoidance
  coupling for $k$ walkers on $K_{m+n}$ (respectively, $K_{m+n}^*$).
  If the original couplings are Markovian then so is the resulting
  coupling.
\end{lemma}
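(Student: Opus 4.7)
The plan is to partition $V(K_{m+n})$ into $A \sqcup B$ with $|A|=m$ and $|B|=n$, identify $A$ with $V(K_m)$ (respectively $V(K_m^*)$) and $B$ with $V(K_n)$ (respectively $V(K_n^*)$), and arrange matters so that after every completed round all $k$ walkers lie in the same part. We start by placing all walkers in $A$, with joint law given by the initial distribution of the given $K_m$-coupling. Each round is then either a \emph{stay round} (all walkers remain in their current part) or a \emph{switch round} (all walkers migrate to the other part), the choice being made implicitly by walker $1$, who moves first.

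Walker $1$ samples her new location uniformly at random from $V(K_{m+n}) \setminus \{Z_1\}$ in the loopless case, and uniformly from $V(K_{m+n})$ in the looped case; this automatically gives her the correct marginal. If her destination lies in her own part $g$ (a stay round), then walker $j$ for $j \ge 2$ takes her turn by applying the given $K_g$ (or $K_g^*$) avoidance coupling to the current joint position, which lies entirely in $g$; walker $1$'s destination plays the role of walker $1$'s step in that coupling. If walker $1$'s destination lies in the other part $g'$ (a switch round), each subsequent walker $j$ moves to a uniformly random vertex of $g' \setminus \{Z'_1, \dots, Z'_{j-1}\}$, where $Z'_1, \dots, Z'_{j-1}$ are the new positions of the walkers who have already moved in this round. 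Walker $j$ detects the mode from her current view: it is a stay round if and only if walker $1$'s current location lies in the same part as walker $j$'s.

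To verify the construction I would check avoidance, the Markov property, and the marginal transition. Avoidance is immediate: in stay rounds it is inherited from the $K_g$ coupling, while in switch rounds each arriving walker lands at an unoccupied vertex of $g'$, and walkers not yet moved lie in $g$ and are trivially distinct from the new arrivals. The Markov property holds because walker $j$'s rule is a deterministic function of the current joint position, using the Markovianity of the input couplings in the stay case. For marginals, conditioning on a switch, walker $1$'s destination is uniform on $g'$; inductively, the joint law of $(Z'_1, \dots, Z'_j)$ in a switch round is uniform on ordered $j$-tuples of distinct vertices of $g'$, so walker $j$'s switch destination has marginal $\mathrm{Unif}(g')$. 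Combining the stay and switch probabilities $(|g|-1)/(m+n-1)$ and $|g'|/(m+n-1)$ (and the looped analogues $|g|/(m+n)$ and $|g'|/(m+n)$) then produces the required uniform transition on $V(K_{m+n}) \setminus \{v_j\}$ or on $V(K_{m+n}^*)$ respectively.

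The step I would be most careful about is the marginal check in the switch case, specifically confirming that walker $j$'s conditional distribution given her own previous position $v_j$ really is $\mathrm{Unif}(g')$. This relies on the fact that walker $1$'s uniform move is independent of $v_j$ and that the sequential uniform choices of walkers $2, \dots, j-1$ induce the uniform-on-ordered-tuples law in $g'$, so the per-target probabilities collapse to $1/(m+n-1)$ (or $1/(m+n)$) uniformly across $j$.
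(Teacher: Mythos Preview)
There is a genuine gap in the stay-round protocol. Having walker~1 sample uniformly from $V(K_{m+n})\setminus\{Z_1\}$ means that, conditioned on landing in her own part $g$, her destination is uniform on $g\setminus\{Z_1\}$, which includes the positions $Z_2,\dots,Z_k$: she collides with positive probability, and the phrase ``walker~1's destination plays the role of walker~1's step in that coupling'' is undefined on that event. More fundamentally, even off the collision event her conditional move is uniform on $g\setminus\{Z_1,\dots,Z_k\}$, which in general is \emph{not} the distribution of walker~1's step in the given $K_g$-coupling; a Markovian avoidance coupling need not move walker~1 uniformly given the full configuration. Feeding the wrong walker-1 move into the $K_g$ transition then spoils the marginals of walkers $2,\dots,k$. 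For a concrete failure take $g\cong K_4$ with the $2\times2$ cluster coupling of Theorem~\ref{thm:ab}: from $(Z_1,Z_2)=(0,2)$ Alice's correct step is to $3$ with probability $2/3$ and to $1$ with probability $1/3$; your uniform step (discarding the collision at $2$) puts her at $1$ or $3$ with probability $1/2$ each, after which Bob's step lands at $3,0,1$ with probabilities $1/2,1/4,1/4$ rather than uniformly.

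The paper repairs both problems by having walker~1 first flip the biased stay/switch coin and, on a stay, take her step \emph{according to the given $K_g$ coupling itself}; this automatically avoids the other walkers and keeps the inner coupling faithful. There is a further subtlety your switch round misses: the uniform ordered tuple you deposit in $g'$ need not match the configuration law that the $K_{g'}$-coupling requires, so the very next stay round in $g'$ can again produce wrong marginals. The paper handles this by having each arriving walker sample the incoming configuration from the \emph{stationary} distribution of the $K_{g'}$-coupling (conditioned on the walkers already placed), which is then preserved by all subsequent stay rounds in~$g'$.
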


\begin{proof}
  Partition the vertex set of $K_{m+n}$ into two clusters $U$ and $V$
  of sizes $m$ and $n$, respectively.  We will ensure that when it is
  the first walker's turn, all of the walkers are in the same cluster.
  At each of her turns, the first walker flips an appropriately biased
  coin to decide whether to move within her current cluster or to
  switch to the other cluster. If she stays in her current cluster she
  moves according to that cluster's coupling rules, and so do the rest
  of the walkers.  If she switches to the other cluster, she moves to
  a uniformly random vertex therein.  Each subsequent walker now
  chooses a random $k$-walker configuration in the new cluster (say,
  $V$) consistent with the walkers that are already in $V$, in
  accordance with the stationary distribution on configurations of the
  $K_n$ (or $K_n^*$) coupling arising just before a move of the first
  walker.  He then moves to his allotted space in this configuration.
\end{proof}

\begin{proof}[Proof of Theorem~\ref{T:almost_linear}]
  We begin with the case of $K_n^*$.  By Theorem~\ref{L:linear} and
  Lemma~\ref{lemma:product} we have
  a Markovian avoidance coupling for $k\leq 2^d$ walkers on $K^*_n$ where
  $n$ is of the form $n=2^{a+1}(2^{d-a+1}{+}1)=2^{d+2}+2^{a+1}$ for any $a\leq d$, as well as
  $n=2^{d+1}{+}1$, $2^{d+2}{+}1$, and $2^{d+1}$.

  For general $n>0$, we write $n=\sum_i n_i 2^i$ where $n_i \in \{0,1\}$.  We define $r$ by
  \[
  n = \sum_{i=0}^d n_i (2^{d+2}+2^i) + r,
  \]
  and it is clear that $2^{d+1} | r$. If $r\ge0$, then
  Lemma~\ref{lemma:m+n} provides a Markovian avoidance coupling for
  $K_n^*$; this inequality indeed holds whenever
  \[
  n \ge \sum_{i=0}^d (2^{d+2}+2^i) = (d{+}{\textstyle\frac32})2^{d+2}-1~.
  \]

  Now any $n\geq 8$ satisfies $(d{+}2)2^{d{+}2} \le n < (d{+}3)
  2^{d{+}3}$ for some integer $d\geq 0$. Thus the above gives a
  Markovian avoidance coupling for any number of walkers up to $2^d$.
  By the first inequality, $8 \times 2^d \leq n$, so $d{+}3\leq \log_2 n$,
  which combined with the second inequality gives $2^d > \frac18 n/(d{+}3)
  \geq n/(8\log_2 n)$, proving the theorem for $K_n^*$ for $n\geq 8$.
  The claim of the theorem is trivial for $n<8$.

  We now turn to the case of $K_n$ (without loops). Recall that if we have
  a Markovian avoidance coupling on $K_n^*$ that stays in waves,
  then removing the looping rounds yields such a coupling on $K_n$.
  Fix $d\geq 1$, and let $S$ be the set of values of $n$ for which Markovian
  avoidance couplings exist on $K_n^*$ for \textit{every\/} number of
  walkers up to $2^{2d-1}$, all of them staying in waves. By
  Lemma~\ref{lemma:m+n}, $S$ is closed under addition.
  From Theorem~\ref{L:linear}, we see that $S$ contains $2^c{+}1$ for all
  $c\ge 2d$.  Using Theorem~\ref{L:linear} and Lemma~\ref{lemma:product},
  when $a\geq1$, $b\geq1$, and $a+b\ge 2d{+}1$, there is a Markovian avoidance
  coupling for $n=(2^a{+}1)(2^b{+}1)$ with $x\,2^{b{-}1}-y$ walkers,
  where $1\leq x\leq 2^{a{-}1}$ and $0\leq y<2^{b{-}1}$.  In particular,
  $S$ contains $(2^a{+}1)(2^b{+}1)$, and specifically $S$ contains
  $2^{2d+1}+1+2^i+2^{2d+1-i}$ for all $1\leq i\leq d$ (and also for $i=0$
  using Theorem~\ref{L:linear} and Lemma~\ref{lemma:m+n}).

  For any $m<2^{d+1}$ we write $m = \sum_{i\le d} m_i 2^i$ with $m_i \in \{0,1\}$, and denote by $\hat{m} =
  \sum m_i 2^{d-i}$ the number with reversed binary expansion.  Then for $m\neq0$, $S$ contains
  \[
  \sum_{i\le d} m_i (2^{2d+1}+1+2^i+2^{2d+1-i}) =
  \|m\| (2^{2d+1}{+}1) + m + 2^{d+1}\hat{m},
  \]
where $\|m\|:=\sum m_i$ denotes the Hamming weight of $m$.  For simplicity
(at the expense of the final constant) we eliminate the dependence on Hamming
weight: since $\|m\|\leq d{+}1$ and $2^{2d+1}{+}1\in S$ we have
\begin{equation}\label{eq:thisinS}
    (d{+}1)(2^{2d+1}{+}1) + m + 2^{d+1}\hat{m} \in S
\end{equation}
  (which holds also for $m=0$).
  In the same way, but using
  $2^{2d+2}+1+2^i+2^{2d+2-i}$ instead, we find that
  \begin{equation}\label{eq:thatinS}
    (d{+}1)(2^{2d+2}{+}1) + m + 2^{d+2}\hat{m} \in S.
  \end{equation}

Write $m' = 2^{d+1}{-}1{-}m =\sum (1{-}m_i) 2^i$, and observe that $\hat{m'}
= \hat{m}'$. Using \eqref{eq:thisinS}, together with~\eqref{eq:thatinS} with
$m'$ in place of $m$, and adding, we get
  $k_0 + 2^{d+1}\hat{m'} \in S$ where $k_0 = (3d{+}5)2^{2d+1} + 2d+1$.
Adding another copy of \eqref{eq:thisinS} we find that
 $k_1{+}m \in S$, where $k_1 = (d{+}2)2^{2d+3} - 2^{d+1} + 3d+2$.
Since the last two statements hold for all values of $m<2^{d+1}$, we may
combine them to deduce, for any $m_0,m_1<2^{d+1}$, that
\[
k_2 + 2^{d+1}m_1 + m_0 \in S,
\]
where
\[
k_2=k_0+k_1 = (7d{+}13)2^{2d+1}-2^{d+1}+5d+3.
\]

It follows that $[k_2,k_2{+}2^{2d}]\subseteq S$, and since
$2^{2d}{+}1\in S$, any integer at least $k_2$ is in $S$. Thus for any
$n\ge 7(d{+}2)2^{2d+1}$, there is a Markovian avoidance coupling for
any number up to $2^{2d-1}$ walkers on $K_n$.
Given $n$, choose $d$ so that $7(d{+}2)2^{2d+1} \le n < 7(d{+}3)
2^{2d+3}$.  From the second inequality we have $2^{2d-1} >
\frac{n}{7\times 16(d{+}3)}$.  From the first inequality we have $2(d{+}3)
\leq \log_2 n - \log_2(7(d{+}2)) + 5 \leq \log_2 n$ (provided $d\geq3$).
But $d\geq3$ for any $n\geq 7(3{+}2)2^{2\times3+1} = 4480$.  So for
$n\geq 4480$ we can couple up to $\frac{n}{7\times 8\log_2 n}$ walkers
on $K_n$.

Since there exists a Markovian avoidance coupling of $8$ walkers on
$K_{17}$ and on $K_{33}$, such a coupling also exists on $K_n$ for any
nonzero $n=17a+33b$ with $a,b\geq0$. This includes all $n>
511=33\times17-33-17$, and implies the claim for $512\leq n\leq 4480$
(since $\frac{4480}{56\log_2 4480} < 8$).  Finally, the claim is
trivial for $n\leq 511$ since $\frac{n}{56\log_2 n} < 2$.
\end{proof}

We combined the number-theoretic avoidance coupling from
Theorem~\ref{L:linear} with the sum and product lemmas to obtain an
avoidance coupling with $\Omega(n/\log n)$ walkers for any $n$.  Given
these three ingredients, this general-$n$ construction is in a sense
best possible up to constants.  More precisely, we argue below that
these three ingredients cannot be combined to obtain a coupling of
more than $n/\|n\|$ walkers on $K_n$ or $K_n^*$, where $\|n\|$ is the
Hamming weight of $n$.

By the distributive law, any coupling that can be constructed using
the sum and product lemmas \ref{lemma:m+n} and \ref{lemma:product}
can be done by taking sums of products of
basic constructions.  Consider the product of $s$ basic couplings of
$2^{d_j}$ walkers on either $2^{d_j+1}+1$ or $2^{d_j+1}$ vertices.
Note that $\|ab\| \leq \|a\|\|b\|$.  The product lemma gives a
coupling of $2^d$ walkers on $K_m$ or $K_m^*$, where $d=\sum d_j$ and
$m\geq 2^{d+s}$ and $\|m\| \leq 2^s$.  In particular $m \geq 2^d \|m\|$.
Next suppose that $n$ is the sum of several such product terms, say $n=\sum_i
m_i$, each corresponding to the same $d$.  Then $n \geq 2^d \sum_i
\|m_i\| \geq 2^d \|n\|$.  In particular the number of walkers is at
most $n/\|n\|$.

Thus, to improve on the $\Omega(n/\log n)$ bound for general $n$, more
ingredients would be needed.

\section{Negative result}

In the negative direction, we have very little.
\begin{theorem}
  No avoidance coupling is possible for $n{-}1$ walkers on $K_n^*$, for $n\geq 4$.
\end{theorem}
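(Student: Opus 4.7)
Suppose for contradiction there exists an avoidance coupling of $n{-}1$ walkers on $K_n^*$ for some $n\geq 4$. Each walker's trajectory is marginally a uniform random walk on $K_n^*$, so each walker stays at her current vertex with conditional probability $1/n$ at every turn. The key structural constraint is that $n{-}1$ walkers occupying $n$ vertices always leaves a unique empty vertex $E_t$, and when walker $j$ moves she may only stay at $X^{(j)}_t$ or jump to $E_t$. Hence the marginal requirement $\Pr(X^{(j)}_{t+1}=u\mid X^{(j)}_t=v)=1/n$ for each $u\neq v$ forces $\Pr(E_t=u\mid X^{(j)}_t=v)\geq 1/n$, and walker $j$ must move to $E_t$ with positive conditional probability given $E_t=u$.

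The pivotal observation is that given any fixed pre-walker-$1$ configuration, the set of positions walker $2$ can reach at her turn is contained in $\{X^{(2)}_t, E_t, X^{(1)}_t\}$: after walker $1$'s move the empty vertex is either $E_t$ (if walker $1$ stayed) or $X^{(1)}_t$ (if walker $1$ moved), and walker $2$ can then only stay at $X^{(2)}_t$ or jump to this empty vertex. This reach set has size exactly $3$, and for $n\geq 4$ it strictly omits some vertex of $[n]$, so walker $2$'s marginal uniformity can only be realized by averaging over randomness in the pre-state. By contrast, when $n=3$ this three-element set already equals $[n]$, which is precisely what allows the construction in Section~3 to succeed.

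The plan is then a forcing argument in the spirit of the non-Markovian lower bound in the proof of Theorem~\ref{k3loops}. I would condition on walker $1$ following a specific deterministic trajectory for $k$ consecutive turns---for instance moving at every turn along a prescribed cyclic sequence of vertices---an event of marginal probability exactly $(1/n)^k$. This forcing pins down the pair $(X^{(1)}_t, E_t)$ at each of walker $1$'s turns, and hence, by the observation above, pins down walker $2$'s three-element reach set in each round. I would combine walker $2$'s marginal uniformity with this rigid restriction to upper bound the conditional probability of the forced joint trajectory by an expression decaying strictly faster than $(1/n)^k$ for $n\geq 4$, yielding the contradiction as $k\to\infty$. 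The main obstacle will be calibrating the forced event carefully enough that the comparison is incompatible for every $n\geq 4$ while correctly failing to produce a contradiction at $n=3$; I expect this to require supplementing walker $2$'s constraint with information from walker $3$ (or equivalently, tracking the empty-vertex propagation through a full round), so that the gap between $|[n]\setminus\{X^{(2)}_t, X^{(1)}_t, E_t\}|=0$ at $n=3$ and $\geq 1$ at $n\geq 4$ is converted into a definite exponential gap in probability.
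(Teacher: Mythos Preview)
Your structural observations are correct and relevant: with $n{-}1$ walkers there is a unique empty vertex, and each walker either stays or jumps to it. However, the forcing argument you outline does not close. If you force walker~$1$ to oscillate between two vertices for $k$ rounds (the natural choice that pins down $E_t$), a short case check shows that \emph{all} other walkers must stay put in every one of those rounds. So the implied event for walker~$2$ is ``stay for $k$ rounds,'' which has marginal probability exactly $(1/n)^k$---the same as the forcing event itself. The rates match; there is no strictly faster decay to exploit. Other forced trajectories for walker~$1$ constrain walker~$2$ \emph{less}, not more. You correctly sense that you would need to bring in walker~$3$ (or more), but you give no mechanism for combining constraints on several coupled walkers into a single probability bound that beats $(1/n)^k$; since the walkers are not independent, you cannot multiply.

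The paper's proof sidesteps this entirely with a one-step argument. Let $A_t^i$ be the event that walker $i$ leaves a vertex and immediately returns ($X^{(i)}_{t-1}=X^{(i)}_{t+1}\neq X^{(i)}_t$), and let $B_t^i$ be the event that walker $i$ stays ($X^{(i)}_{t-1}=X^{(i)}_t$). A round trip by walker~$1$ forces every other walker to stay (this is precisely your empty-vertex observation), so $A_t^1\Rightarrow B_t^3$; likewise $A_t^2\Rightarrow B_t^3$. Crucially, $A_t^1$ and $A_t^2$ are \emph{disjoint}: if walker~$1$ makes a round trip, walker~$2$ must stay and hence cannot make a round trip. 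Therefore
\[
\tfrac{1}{n}=\Pr(B_t^3)\ \ge\ \Pr(A_t^1)+\Pr(A_t^2)=\tfrac{2(n-1)}{n^2},
\]
which fails for $n\ge3$; since three walkers are needed, this gives the contradiction for $n\ge4$. The key idea you are missing is this disjointness, which is what converts two separate constraints (each individually tight) into one strict inequality.
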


\begin{proof}
We exploit the effect that it is difficult for a walker to leave a vertex $v$
at one step and then immediately return to $v$ at the next step. This
requires that none of the other walkers enter $v$ in the interim.
But since $v$ is the only available vertex for a move, this means that all
other walkers must remain stationary.

Let $A_t^i$ be the event that the $i$th walker is in the same position
at times $t{-}1$ and $t{+}1$, but in a different position at time $t$.
Let $B_t^i$ be the event that the $i$th walker is in the same position at times
$t{-}1$ and $t$.

Suppose an avoidance coupling exists.
From the observation in the first paragraph, the events $A_t^1$ and $A_t^2$
are disjoint, and each of them implies the event $B_t^3$. Since each
walker individually performs a random walk, the events $A_t^1$ and $A_t^2$
have probability $(n{-}1)/n^2$, so that the probability of $B_t^3$
must be at least $2(n{-}1)/n^2$. But the probability
of $B_t^3$ should be exactly $1/n$, which is less than $2(n{-}1)/n^2$.
This gives a contradiction, as required.
\end{proof}

\pagebreak
\section{Open Problems}

We have barely scratched the surface of avoidance coupling in this work; in
particular we have considered only complete graphs and concentrated on
discrete, alternating, Markovian couplings.  Even in this limited realm, many
intriguing open questions remain:

\begin{enumerate}

\item\textbf{Maximum number of walkers.}  Is there an avoidance coupling for
    a linear number of walkers on the unlooped complete graph $K_n$ for
    general $n$? Can upper bounds of the form $cn$ for $c<1$ be found for
    the maximum number of walkers that can be avoidance-coupled on $K_n$
    or $K_n^*$? Ditto for Markovian couplings?

\item\textbf{Monotonicity in \boldmath$n$.} If there is an avoidance coupling for
  $k$ walkers on $K_n$, must there necessarily be one for $k$ walkers
  on $K_{n+1}$?  Similarly in the Markovian case, for either $K_n$
  versus $K_{n+1}$ or $K_n^*$ versus $K_{n+1}^*$.

\item\textbf{Monotonicity in \boldmath$k$.} If there is a Markovian avoidance coupling
  for $k$ walkers on $K_n$ (or $K_n^*$), is there one for $k{-}1$ walkers
  on the same graph?  The answer is ``yes'' for non-Markovian couplings,
  since the $k$th walker can be imagined.  The answer is ``yes'' for the
  Markovian couplings that we exhibited, but it is not clear if this holds
  in general.

\item\textbf{Monotonicity in loop weights.} Suppose that $K_n$ is equipped
  with loops of weight $w$, so that a walker loops with probability
  $w/(w{+}n{-}1)$.  If there is an avoidance coupling for $k$ walkers on
  $K_n$ with loops of weight $w$, must there be one with loops of weight
  $w'$, where $w' > w$?  The answer is ``yes'' for non-Markovian couplings,
  but what about the Markovian case?  In particular, is the maximum number
  of Markovian avoiding walkers always at least as great on $K_n^*$ as it
  is on $K_n$?

\item\textbf{Minimum entropy couplings.}  Does existence of an avoidance
    coupling imply existence of a stationary avoidance coupling whose
    entropy equals that of a single random walk?

\item\textbf{1-avoidance.} What is the largest $p$ for which $k$ i.i.d.\
    $\bern(p)$ sequences can be coupled, taking turns to move as usual,
    so that no two simultaneously take the value $1$?
\end{enumerate}

\noindent
\textbf{Acknowledgments.}
The authors are grateful to the Institute for Elementary Studies and
to the Banff International Research Station, where this work began,
and to Microsoft Research.  Supported in part by NSERC (OA), an EPSRC
Advanced Fellowship (JM), and NSF grant \#0901475 (PW).


\end{document}